\shorttitle{Optimal stopping time on SMPs} 
    \def\dz{\delta}
\def\lz{s} 
     \def\oz{\omega}
\def\pz{\pi}
        \def\sz{\sigma}
\def\tz{\tau}        
\def\vz{\varepsilon}
\def\ggz{\Gamma}  \def\ooz{\Omega}
\def\ppz{\Pi}
   \def\bq{{\mathscr{B}}}
   \def\fq{{\mathscr{F}}}
\def\qd{\quad}
\def\qqd{\qquad}
\def\lt{\left}
\def\rt{\right}
\def\PP{\mathbb{P}}
\def\EE{\mathbb{E}}
\def\leq{\leqslant}
\def\geq{\geqslant}
\def\prf{\medskip \noindent {\bf Proof}. }
\def\deprf{\quad $\square$ \medskip}
\def\be{\begin{equation}}
\def\de{\end{equation}}
\def\dear{\end{eqnarray*}}
\def\lb{\label}
\def\den{\end{enumerate}}
\def\d{\mathrm{d}}
\def\Bo{\mathcal{B}}
\def\Po{\mathcal{P}}
\def\Bo{\mathcal{B}}
\def\PP{\mathbb{P}}
\def\EE{\mathbb{E}}
\def\MM{\mathbb{M}}
\def\RR{\mathbb{R}}
\def\NN{\mathbb{N}}
\def\FF{\mathbb{F}}
\def\TT{\mathbb{T}}
\def\one{\mathds{1}}
\begin{document}

\title{Optimal stopping time on semi-Markov processes \newline with finite horizon} 

\authorone[Sun Yat-Sen University]{Fang Chen}
\authorone[Sun Yat-Sen University]{Xianping Guo}
\authortwo[Beijing Normal University at Zhuhai]{Zhong-Wei Liao}

\addressone{School of Mathematics, Sun Yat-Sen University, Guangzhou 510275, China.} 

\addresstwo{Corresponding author. College of Education for the Future, Beijing Normal University at Zhuhai, Zhuhai 519087, China.} 
\emailtwo{zhwliao@hotmail.com \& zhwliao@bnu.edu.cn} 

\begin{abstract}
In this paper, we consider the optimal stopping problem on semi-Markov processes (SMPs) with finite horizon, and aim to establish the existence and computation of optimal stopping times. To achieve the goal, we first develop the main results of finite horizon semi-Markov decision processes (SMDPs) to the case with {\it additional} terminal costs, introduce an explicit construction of SMDPs, and prove the equivalence between the optimal stopping   problems on SMPs and SMDPs. Then, using the equivalence and the results on SMDPs developed here, we not only show the existence of optimal stopping time of SMPs, but also provide an algorithm for computing optimal stopping time on SMPs. Moreover, we show that the optimal and $\varepsilon$-optimal stopping time can be characterized by the hitting time of some special sets, respectively.
\end{abstract}

\keywords{optimal stopping time, semi-Markov processes, value function, semi-Markov decision processes, optimal policy, iterative algorithm}

\ams{90C40}{93E20, 60G40}

\section{Introduction} 

The optimal stopping problem is an important branch of the intersection of probability and control theory, which aims to find the optimal stopping time of stochastic systems under some certain criterion, and has been studied and widely applied in finance, such as the pricing of American options and the buying-selling problem, see the monographes \cite{BR11,CRS91,S78} and the references therein. In most existing literature on optimal stopping problems, the case on discrete-time Markov processes (de Saporta et al. \cite{DDN17}; Dufour \& Piunovskiy \cite{DP10}; Huang \& Zhou \cite{HZ19}) and the case on continuous-time Markov processes (Arkin \& Slastnikov \cite{AS21}; B\"auerle \& Popp \cite{BP18}; Christensen  \& Lindensj\"{o} \cite{CL20};  Gapeev et al. \cite{GKL21}; Shao \& Tian \cite{ST21}) are commonly considered.

Note that between two jump epochs in discrete-time Markov processes  and continuous-time Markov processes are constant and  exponentially distributed, respectively. As is well known, semi-Markov processes (SMPs) are more general than these two processes since the time between two jump epochs in SMPs can not only follow more distribution but also depend on the present state and the next state. Thus, SMPs haven been widely studied and applied in many areas \cite{HG11,LO01,JN06}. However, to the best of our knowledge, the optimal stopping problem on SMPs is addressed only in Boshuizen \& Gouweleeuw \cite{BG93} and Chen et al. \cite{CGL21}. More precisely, Boshuizen \& Gouweleeuw \cite{BG93} and Chen et al. \cite{CGL21} both studied the optimal stopping problems under the infinite-horizon discounted criterion, and show the existence of an optimal time for time-dependent costs in Boshuizen \& Gouweleeuw \cite{BG93} and state-dependent costs in Chen et al. \cite{CGL21}, respectively. As in well-known in many practical applications, the underlying process will be forced to end at a certain time. Thus, it is natural and desirable to consider the optimal stopping problem on SMPs with finite horizon, which  has not been studied yet, and which will be studied in this paper.

In practical applications, if one breaks a contract, one has to pay an additional penalty, which will be regarded as a terminal cost. Thus, in optimal stopping problems with finite horizon, the costs consist of both the running costs and  additional  terminal costs \cite{BR11}. To deal with the optimal stopping problem on SMPs with finite horizon, we follow the idea of transforming the discrete-time optimal stopping problem into an equivalent discrete-time Markov decision process in \cite{BR11}, which has an obvious  advantage that the existence and computation of optimal stopping times can be obtained together.  Indeed, noting that ``continue" or ``stop" may be considered as an action, using the date of the optimal stopping problem on SMPs, we also construct a corresponding SMDP with the action space $\{0,1\}$, where the action 0 and action 1 mean continuation and stop respectively, and prove the  equivalence between the optimal stopping time problems on SMPs and the corresponding SMDPs, that is, given any deterministic policy in the SMDPs and planning horizon, we can induce a stopping time with the same expected cost as that for the policy, and vice versa. To deal with optimal stopping problem on SMPs with a terminal cost by the equivalent SMDP, we need to extend the results in \cite{HG11} without any terminal cost to the more natural case with an additional terminal cost, and establish the existence of an optimal policy and an approximation algorithm for the value function of the SMDP by a minimal non-negative solution method.  Using this equivalence and the results about SMDPs developed here, we not only show the existence of optimal stopping time on SMPs, but also provide an algorithm for computing optimal stopping time on SMPs. Moreover, we show that the optimal and $\varepsilon$-optimal stopping time can be characterized by the hitting time of some special sets.

The rest of the paper is organized as follows. We describe optimal stopping problems on SMPs with finite horizon in Section 2. In section 3, we develop some results on SMDPs with additional terminal costs. Our main results on the existences and computation of optimal stopping times  are given in Section 4 after giving the preliminaries in Section 3.

\section{Optimal stopping problem on semi-Markov processes}

{\bf Notation.} If $X$ is a Borel space, we denote by $\Bo(X)$ the Borel $\sigma$-algebra, by $\Po(X)$ the set of all probability measures on $\Bo(X)$, by $\delta_x$ the Dirac measure at the point $x$, and by $\one_{D}$ the indicator function on the set $D\subset X$. Moreover, let $\RR:=(-\infty,+\infty)$, $\RR_+:=[0,+\infty)$, $x^+:=\max\{x,0\}$ and
$x\wedge y:=\min\{x,y\}$ (for all $x,y\in \RR$). Finally, for any sequence $\{x_k\}\subset R$, we use the convention $\sum_{k=n}^m y_k=0$ if $n>m$.

The model of SMPs is the two-tuples as below
\be\lb{smp}
\{E,Q(\cdot,\cdot|x)\}
\de
where $E$ is the state space, which is assumed to be a Borel space and the transition mechanism of the SMPs is defined by the semi-Markov kernel $Q(\cdot,\cdot|x)$ on $\RR_+\times E$ given $E$. It is assumed that:
\begin{itemize}
\item[(i)] given any $B\in \Bo(E)$ and $x\in E$, $Q(\cdot, B| x)$ is a non-decreasing right continuous real-valued function on $\RR_+$, with $Q (0, B| x) =0$;
\item[(ii)] given any $t \in \RR_+$, $Q (t, \cdot | \cdot)$ is a sub-stochastic kernel on $E$;
\item [(iii)] $\lim_{t \to \infty} Q (t, \cdot | \cdot)$ is a stochastic kernel on $E$.
	\end{itemize}
Then, we introduce the measurable space $(\ooz, \fq)$, which is based on the Kitaev construction (see \cite{K86, KR95}),
$$
\ooz = \lt\{ (x_0, t_1, x_1, \ldots, t_n, x_n, \ldots) : x_0\in E, (t_n,x_n) \in \RR_+ \times E, n \geq 1 \rt\},
$$
and $\fq$ is the corresponding product Borel $\sz$-algebra.
The history of SMPs up to the $n$-th jump epoch is
$$ 
h_0 = x_0, \qd h_{n+1} = (x_0, t_1, x_1, \ldots, t_{n+1}, x_{n+1}), \qd n \geq 0.
$$ 
Let $H_n$ be the set of all histories $h_n$. For each $\oz=(x_0,t_1,\ldots,x_n,t_{n+1},\ldots)\in \ooz$ , define
\begin{align*}
&X_n(\oz)=x_n,\qd T_0(\oz)=0, \qd T_{n+1}(\oz)=t_{n+1},\qd S_n(\omega)=\sum_{k=0}^n T_{k}(\omega), \qd \forall n\geq 0,
\end{align*}
where $S_n$, $T_{n+1}$ and $X_n$ denote the $n$-th jump time, the sojourn time between the $n$-th and $(n+1)$-th jumps and the state at the $n$-th jump time, respectively. Further, we assume here that the decision may only depend on the observation of the marked point process $\{T_n,X_n,n\geq 0\}$. Thus we denote by $\fq_n$ the filtration generated by $\{T_n,X_n,n\geq 0\}$, i.e.,
\begin{align*}
\fq_n:=\sigma(T_0,X_0,\ldots,T_n,X_n).
\end{align*}
Hence, we can give the definition of stopping times as following.
\begin{defn}\lb{s3-defn1}
A random variable $\tz: \ooz \to \mathbb{N} \cup \{+\infty\}$ is called $\fq_n$-stopping time if for all $n\in \mathbb{N}$,
 $$
	\{  \tz  = n \} \in \fq_n .
$$
\end{defn}
This condition means that upon observing the marked point process $\{X_n,T_n,n\geq 0\}$ until $n$-th jump time we can decide whether or not $\tz$ has already occurred. Since the filtration will always be generated by $\{X_n,T_n,n\geq 0\}$ in this paper, we will not mention it explicitly. Denote by $\ggz$ the set of all stopping times.

Using the Tulcea theorem (see \cite[Proposition C.10]{HL96}), for each $x\in E$, there exists a unique probability measure $\PP_x$ on $(\ooz, \fq)$ satisfying that $\PP_x ( T_0 =0, X_0 =x ) =1$ and
\begin{align*}
& \PP_x (T_{n+1} \leq t, X_{n+1}\in B | Y_n ) = Q( t, B | X_n ), 
\end{align*}
where $Y_n=(X_0, \ldots,T_n, X_n)$. Denote by $\EE_x$ the expectation with respect to $\PP_x$. Moreover, we give the following assumption, which can ensure the regularity of SMPs, i.e., $\PP_x ( \lim_{n \to \infty}S_n=\infty)=1$.
\begin{assumption}\lb{s2-ass1}
There exist $\dz > 0$ and $\epsilon > 0$, such that
\begin{equation}\lb{ass}
Q (\dz, E | x) \leq 1 - \epsilon, \qd \forall x \in E.
\end{equation}
\end{assumption}

\noindent The Assumption \ref{s2-ass1} is a standard regular condition widely used in SMPs and SMDPs, see \cite{CGL21,  HG11,LO01}, for instance. According to \cite{HG11}, the Assumption \ref{s2-ass1} implies that
$$
\PP_x ( \lim_{n \to \infty} S_n = \infty )=1, \qd \forall x \in E.
$$
Corresponding to $\{ (T_n, X_n), n \geq 0\}$, we define an underlying continuous-time state process $\{ X(t), t\in \RR_+ \}$ by
$$
X (t) = X_n, \qd S_n \leq t < S_{n+1}.
$$
Refer to Limnios and Oprisan \cite{LO01} for more details about $\{ X(t), t \in [0, \infty) \}$. Next step, we introduce the optimal stopping problem with finite horizon.

Let $c (x)$ and $g(x)$ be the nonnegative measurable real-valued functions on $E$, which represent the cost rates and the terminal costs, respectively. For a given planning horizon $T\in \RR_+$, the optimal stopping time problem with finite horizon $T$ implies that if we have not stopped before time $T$ we must stop paying at time $T$. Thus, for a given planning horizon $T\in \RR_+$, if we choose a stopping time $\tau\in \ggz$, we pay the cost
\begin{equation}\lb{r-t}
R_{\tau}^T :=
\left\{
\begin{aligned}
&{ \int_{0}^{S_{\tz}}  c(X(t)) \d t + g ( X(S_\tz) ) , } & &S_\tz< T; \\
&{ \int_{0}^{T}  c(X(t)) \d t, } & & S_\tz \geq T. \\
\end{aligned}
\right.
\end{equation}
\begin{rem}
From the definition $R_\tau^T$, if we stop before time $T$, we need to pay the terminal cost at the time $S_\tau$. This is very common in practical applications, such as house renting problems. Because if one breaks the contract, one has to pay an additional penalty.
\end{rem}
The $T$-horizon expected cost of a stopping time $\tau $ is given by
\begin{equation}\lb{st-c}
V^\tau (T,x) := \EE_{x}\lt[ R^T_\tau\rt] ,\qd x\in E.
\end{equation}
Then, the value function of optimal stopping problems with finite horizon $T$ is defined by
\be\lb{st-v}
V^*(T,x):=\inf_{\tau\in \ggz}V^\tau (T,x) .
\de
\begin{defn}
Given any planning horizon $T\in \RR_+ $, a stopping time $\tz^* \in \ggz$ is called $T$-optimal if it satisfies that
$$
V^{\tz^*} (T,x) = V^* (T,x) = \inf_{\tz \in \ggz}V^{\tau} (T,x) ,\qd \forall  x\in E.
$$
\end{defn}

Here and what follows, we fix a planning horizon $T\in \RR_+$. The main purpose of this paper is to find a $T$-optimal stopping time and give an algorithm for computing the value function $V^*$.

\section{On semi-Markov decision processes}
We want to solve the stopping time problem by formulating it as SMDPs, so we need to consider the model of SMDPs with a terminal cost and give some results about the model. Moreover, if the terminal cost is always equal to $0$, these results are same as those about SMDPs without terminal cost in \cite{HG11}.

Here and in what follows, we always use ``$\hat{\qd} $'' to distinguish SMDPs from SMPs. The model of SMDPs is introduced by:
$$
\{\hat{E},A,(A(x),x\in \hat{E}),\hat{Q}(\cdot,\cdot|x,a),\hat{c}(x,a),\hat{g}(x,a)\}
$$
where $\hat{E}$ is the state space and $A$ is the action set, which are assumed to be a Borel space and a denumerable set, respectively; $A(x) \subset A$ denotes the set of admissible actions at $x\in \hat{E}$, which assume to be finite; $\hat{Q}(\cdot,\cdot|x,a)$ is the semi-Markov kernel on $\RR_+\times \hat{E}$ given $K$, where $K:=\{(x,a)|x\in \hat{E},a\in A(x)\}$ denotes the set of admissible state-action pairs. Assume that $K\in \Bo(\hat{E})\times \Bo(A)$ and that there exists a measurable mapping $f:\RR\times \hat{E} \to A$ such that $(x,f(t,x))\in K$ for all $(t,x)\in \RR\times \hat{E}$. Finally, the functions $\hat{c}(x,a)$ and $\hat{g}(x,a)$ on $K$ represent the cost rates and terminal costs, which are assumed to be nonnegative and measurable.

\begin{rem}
If $\hat{E}$ is denumerable and $\hat{g}=0$, the model is same as that in \cite{HG11}.
\end{rem}

The evolution of the finite horizon SMDPs as follows. Initially, the system occupies some state $x_0\in \hat{E}$ and the decision maker has a planning horizon $\lz\in \RR$, then he/she chooses an action $a_0\in A(x_0)$ according to the current state $x_0$ and the planning horizon $s$. As a consequence, the system jumps to state $x_1$ after a sojourn time $t_1$ in $x_0$, in which the transition law is subject to the semi-Markov kernel $\hat{Q}$. At time $t_1$, there is a remaining planning horizon $\lz-t_1$ for the decision maker. According to the current state $x_1$ and the current planning horizon $s-t_1$ as well as the previous state and action $(x_0,a_0)$ and the sojourn time $t_1$, the decision maker chooses an action $a_1\in A(x_1)$ and the same sequence of events occur. The decision process evolves in this way and thus we obtain a remaining planning horizon $s-\sum_{k=1}^n t_k$ and an admissible history $\hat{h}_n$ and of the SMDPs up to the $n$-th decision epoch i.e.,
$$
\hat{h}_n=(x_0,a_0,t_1,x_1,\ldots,a_{n-1},t_n,x_n),
$$
where $(x_m,a_m)\in K$, $t_{m+1}\in \RR_+$ for all $m=0,1,\ldots,n-1$, $x_n\in \hat{E}$. Let $\hat{H}_n$ denote the set of all admissible histories $\hat{h}_n$ of the system up to the $n$-th decision epoch, which is endowed with the corresponding product $\sigma$-algebra.
\begin{defn}
A policy $\pi=\{\pi_n,n\geq 0\}$ is a sequence 	of stochastic kernels $\pi_n$ on $A$ given $\RR\times \hat{H}_n$ satisfying
\be  \lb{pi-def}
\pi_{n}(A(x_n)|s,\hat{h}_n)=1, \qd \forall n\geq 0,\hat{h}_n=(x_0,a_0,t_1,x_1,\ldots,a_{n-1},t_n,x_n)\in \hat{H}_n.
\de
\end{defn}
The set of all policies is denoted by $\ppz$.
\begin{rem}
The $s$ in (\ref{pi-def}) means the remaining planning horizon up to the $n$-th decision epoch, and it is assumed can be negative just for convenience. Moreover, the definition of policies here is horizon-relevant, whereas that in infinite horizon case is not.
\end{rem}
To distinguish the subclasses of $\ppz$, we introduce the following notations.

{\bf Notation.} Let $\Phi$ represent the set of stochastic kernels $\varphi$ on $A$ given $\RR \times \hat{E}$ such that $\varphi(A(x)|s,x)=1$ for all $(s,x)\in \RR \times \hat{E}$, and $\FF$ represent the set of measurable functions $f:\RR\times \hat{E} \to A$ such that $f(s,x)\in A(x)$ for all $(s,x)\in \RR \times \hat{E}$.

\begin{defn}\lb{pclass}
\begin{description}
\item [\rm (a)] A policy $\pi=\{\pi_n\}$ is said to be randomized Markov if there is a sequence $\{\varphi_n\}$ of stochastic kernels $\varphi_n\in \Phi$ such that $\pi_n(\cdot|s,\hat{h}_n)=\varphi_n(\cdot|s,x_n)$ for every $(s,\hat{h}_n)\in \RR \times \hat{H}_n$ and $n\geq 0$. We write such a policy as $\pi=\{\varphi_n\}$.
\item[\rm (b)] A randomized Markov policy $\pi=\{\varphi_n\}$ is said to be randomized stationary if $\varphi_n$ are independent of $n$. In this case, we write $\pi$ as $\varphi$ for simplicity.
\item[\rm (c)] A policy $\pi=\{\pi_n\}$ is called deterministic if there exists a sequence $\{d_n\}$ of measurable functions $d_n:\RR \times \hat{H}_n \to A$ such that for all $(s,\hat{h}_n)\in \RR \times \hat{H}_n$ and $n\geq 0$, $d_n(s,\hat{h}_n)\in A(x_n)$ and $\pi_n(\cdot|s,\hat{h}_n)$ is the Dirac measure at $d_n(s,\hat{h}_n)$, i.e.,
$$\pi_n(a|s,\hat{h}_n)=\delta_{\{d_n(s,\hat{h}_n)\}}(a), \quad \forall \, a\in A.$$
We write such a policy as $\pi=\{d_n\}$.
\item[\rm (d)]A randomized Markov policy $\pi=\{\varphi_n\}$ is said to be deterministic Markov if there is a sequence $\{f_n\}$ of functions $f_n\in \FF$ such that $\varphi_n(\cdot|s,x)$ is concentrated at $f_n(s,x)$ for all $(s,x)\in \RR \times \hat{E}$ and $n\geq 0$. We write such a policy as $\pi=\{f_n\}$.
\item[\rm (e)]A deterministic Markov policy $\pi=\{f_n\}$ is said to be deterministic stationary if $f_n$ are independent of $n$. In this case, we write $\pi$ as $f$ for simplicity.
\end{description}	
\end{defn}

For convenience, we denote by $\Pi_{RM}$, $\Pi_{RS}$, $\Pi_{DH}$, $\Pi_{DM}$ and $\Pi_{DS}$ the families of all randomized Markov, randomized stationary, deterministic, deterministic Markov and deterministic stationary policies, respectively. Obviously, $\FF=\Pi_{DS}\subset \Pi_{DM} \subset \Pi_{DH} \subset \Pi$ and  $\FF \subset \Phi=\Pi_{RS} \subset \Pi_{RM} \subset \Pi$.

Let $\hat{\Omega}=(\hat{E}\times A \times \RR_+)^\infty$ be a sample space and $\hat{\fq}$ be the corresponding product $\sigma$-algebra. Similar to SMPs, for any $\hat{\oz}=(x_0,a_0,t_1,x_1,\ldots,a_{n},t_{n+1},x_{n+1},\ldots)\in \hat{\ooz}$ and $n\geq 0$, we can define
$$
\hat{T}_0(\hat{\oz})=0,\quad \hat{T}_{n+1}(\hat{\oz})=t_{n+1}, \quad \hat{X}_n(\hat{\oz})=x_n, \quad A_n(\hat{\oz})=a_n.
$$
Further, for all $n\geq 0$, let $\hat{S}_n=\sum_{m=0}^n \hat{T}_m$. And then, we define $\{\hat{X}(t), A(t),t\in \RR_+\}$ by
\begin{align*}
\hat{X}(t) :=
\left\{
\begin{array}{ll}
\hat{X}_n , \qd \hat{S}_n \leq t < \hat{S}_{n+1} ; \\
\partial_S, \qd t\geq \lim_{n \to\infty}\hat{S}_n ;
\end{array}
\right.
 \qd A(t)  :=
\left\{
\begin{array}{ll}
A_n ,  \hat{S}_n \leq t < \hat{S}_{n+1} ; \\
\partial_A,  t\geq  \lim_{n \to\infty}\hat{S}_n ,
\end{array}
\right.
\end{align*}
where $\partial_S$ and $\partial_A$ are the extra state and action jointed to $\hat{E}$ and $A$, respectively. Now, given $(s,x)\in \RR\times \hat{E}$ and $\pi=\{\pi_n\}\in \Pi$, by the Ionescu Tulcea theorem (see \cite[Proposition B.2.5]{BR11}), there exists a unique probability measure $\hat{\PP}_{(s,x)}^\pi$ on $(\hat{\Omega},\hat{\fq})$ such that
\begin{align}
& \hat{\PP}_{(s,x)}^\pi ( \hat{T}_0 =0, \hat{X}_0=x ) =1, \lb{dpp-1}\\
& \hat{\PP}_{(s,x)}^\pi  (A_n=a | \hat{Y}_n ) = \pi_{n}(a|(s-\hat{S}_n),\hat{Y}_n), \lb{dpp-2}\\
& \hat{\PP}_{(s,x)}^\pi (\hat{T}_{n+1}\leq t,\hat{X}_{n+1}\in B|\hat{Y}_n,A_n)=\hat{Q}(t,B|\hat{X}_n,A_n), \lb{dpp-3}
\end{align}
where$ \hat{Y}_n:=( \hat{X}_0, A_0, \hat{T}_1, \hat{X}_1,\ldots,  A_{n-1}, \hat{T}_n, \hat{X}_n)$, $t\in \RR_+$, $B\in \Bo(\hat{E})$, $a\in A$ and $n\geq 0$. The expectation operator with respect to $\hat{\PP}_{(s,x)}^\pi$ is denoted by $\hat{\EE}_{(s,x)}^\pi$. Recall that we fix a planning horizon $T$ in Section 2. To treat $T$-horizon optimization problem, naturally, we assume a finite number of jumps until time $T$. Thus, we propose Assumption \ref{ass1}.
\begin{assumption}\lb{ass1}
For all $(\lz,x)\in \RR\times \hat{E}$ and $\pz\in \ppz$, $\hat{\PP}_{(s,x)}^\pz(\{\lim_{n\to \infty}\hat{S}_n=\infty\})=1$.
\end{assumption}

Given $(\lz,x)\in \RR_+ \times \hat{E}$, we define the expected cost of a policy $\pi\in \ppz$ by
$$
U^{\pi}(s,x):=\hat{\EE}^\pi_{(s,x)}\left[  \int_0^{s} \hat{c}(\hat{X}(t),A(t))\d t+ \hat{g}(\hat{X}(s),A(s)) \right]
$$
and the value function (or minimum expected cost) by
$
U^*(s,x):=\inf_{\pi\in \Pi}U^{\pi}(s,x).
$
\begin{defn}
A policy $\pi^*\in \Pi$ is called $T$-horizon optimal if
$$
U^{\pi^*}(s,x)=U^*(s,x),\quad \forall \, (s,x)\in [0,T]\times \hat{E}.
$$
\end{defn}

\begin{rem}
Noting that the stopping time $\tau^*$ is $T$-optimal if and only if it achieves $V^*$ for the fixed planning horizon $T$, while if the policy $\pi^*\in \ppz$ is $T$-optimal, $\pz^*$ needs to achieve the value function of SMDP for all $\lz\in [0,T]$.
\end{rem}

The focus of this section is on finding an optimal policy in $\Pi$, which can deduce the optimal stopping time. The following result reveals that it suffices to seek for optimal policies in $\Pi_{RM}$.

\begin{prop}\lb{smdp-prop1}
Suppose that Assumption \ref{ass1} holds. Then for each $\pz=\{\pz_n\}\in \ppz$ and $(s,x)\in \RR_+\times \hat{E}$, there exists a policy $\hat{\pi}=\{\varphi_n\}\in \Pi_{RM}$ such that $U^\pi(s,x)=U^{\hat{\pi}}(s,x)$.
\end{prop}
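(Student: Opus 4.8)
The plan is to fix $\pi=\{\pi_n\}\in\Pi$ and $(s,x)\in\RR_+\times\hat E$, and to construct the randomized Markov policy $\hat\pi=\{\varphi_n\}$ by matching, at each decision epoch $n$, the conditional distribution of the pair $(\hat T_{n+1},\hat X_{n+1}, A_{n+1},\ldots)$ -- or more precisely the joint law of $(A_n)_{n\ge 0}$ together with $(\hat T_n,\hat X_n)_{n\ge 0}$ -- under $\hat\PP^\pi_{(s,x)}$. The key observation, as in the classical Markov decision theory (e.g.\ \cite[Theorem 5.5.1]{BR11} or the analogous statement in \cite{HG11}), is that since the cost functional $U^\pi(s,x)=\hat\EE^\pi_{(s,x)}[\int_0^s\hat c(\hat X(t),A(t))\,\d t+\hat g(\hat X(s),A(s))]$ depends on $\omega$ only through $s$, the jump times $\{\hat S_n\}$, and the ``current'' state-action pair at each time -- hence, by the piecewise-constant structure of $\hat X(\cdot)$ and $A(\cdot)$, only through the one-dimensional marginals $\hat\PP^\pi_{(s,x)}(\hat X_n=\cdot,\,A_n=\cdot,\,\hat S_n\le t<\hat S_{n+1})$ -- it is enough for $\hat\pi$ to reproduce these marginals rather than the full path law.

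First I would define, for each $n\ge 0$ and each Borel $D\subset A$,
$$
\varphi_n(D\,|\,r,y):=\hat\PP^\pi_{(s,x)}\big(A_n\in D\ \big|\ \hat X_n=y,\ s-\hat S_n=r\big),
$$
i.e.\ the regular conditional probability of $A_n$ given the current state $\hat X_n$ and the remaining horizon $s-\hat S_n$, which exists because $\hat E$ is Borel and $A$ is denumerable; a measurable selection of a version gives $\varphi_n\in\Phi$, and \eqref{dpp-2} together with \eqref{pi-def} guarantees $\varphi_n(A(y)\,|\,r,y)=1$ a.e., which can be patched to hold everywhere by redefining $\varphi_n$ off a null set using the reference selector $f$ from the model data. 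Then I would show, by induction on $n$, that the law of $\hat Y_n=(\hat X_0,A_0,\hat T_1,\ldots,\hat T_n,\hat X_n)$ under $\hat\PP^{\hat\pi}_{(s,x)}$ coincides with that under $\hat\PP^\pi_{(s,x)}$ on the sub-$\sigma$-algebra generated by $(\hat S_n,\hat X_n,A_n)$ -- or, cleanest, that the marginal laws of $(\hat S_n,\hat X_n,A_n)$ agree for every $n$. The induction step uses \eqref{dpp-3}, which is identical for both policies since $\hat Q$ does not depend on $\pi$, to propagate the joint law of $(\hat S_n,\hat X_n)$ from step $n$ to step $n+1$; and it uses the defining property of $\varphi_n$ together with \eqref{dpp-2} to check that conditionally on $(\hat S_{n},\hat X_{n})$ the action $A_{n}$ has the same law under both measures.

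Once the marginals match, I would conclude by writing, for any policy,
$$
U^\pi(s,x)=\sum_{n=0}^{\infty}\hat\EE^\pi_{(s,x)}\Big[\int_{\hat S_n\wedge s}^{\hat S_{n+1}\wedge s}\hat c(\hat X_n,A_n)\,\d t\Big]+\sum_{n=0}^\infty\hat\EE^\pi_{(s,x)}\big[\hat g(\hat X_n,A_n)\,\one_{\{\hat S_n\le s<\hat S_{n+1}\}}\big],
$$
where the interchange of sum and expectation is justified by nonnegativity of $\hat c$ and $\hat g$ (monotone convergence / Tonelli), and Assumption \ref{ass1} ensures $\sum_n\one_{\{\hat S_n\le s<\hat S_{n+1}\}}=1$ a.s.\ so the terminal term is well defined. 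Each summand on the right is an integral of a fixed measurable function of $(\hat S_n,\hat X_n,A_n)$ (and the independent-of-policy increment law of $\hat T_{n+1}$), hence is unchanged when $\pi$ is replaced by $\hat\pi$; summing gives $U^\pi(s,x)=U^{\hat\pi}(s,x)$.

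I expect the main obstacle to be the bookkeeping in the induction: one must be careful that $\varphi_n$ is a bona fide stochastic kernel \emph{everywhere} on $\RR\times\hat E$ (not just on a set of full measure under the particular starting law $\hat\PP^\pi_{(s,x)}$), since a policy must be defined for all histories, and that the regular conditional probabilities are chosen jointly measurably in $(r,y)$; the denumerability of $A$ and Borel-ness of $\hat E$ make this routine but not entirely automatic, and it is the one place where the extra terminal cost $\hat g$ plays no role, so the argument is genuinely the same as in \cite{HG11} modulo carrying the $\hat g$-term through the final cost decomposition.
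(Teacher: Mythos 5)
Your proposal is correct and takes essentially the same route as the paper's own proof: the same decomposition of the cost over jump epochs (the paper's (\ref{vn})), the same construction of $\varphi_n$ as the conditional law of $A_n$ given the current state and the remaining horizon $s-\hat{S}_n$ (the paper's (\ref{prop1-2}), which patches the kernel off the relevant range with the uniform distribution on $A(y)$ rather than a reference selector), and the same induction matching the marginal laws of $(\hat{S}_n,\hat{X}_n,A_n)$ with the sojourn time handled through the policy-independent kernel $\hat{Q}$. No substantive differences to report.
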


\begin{proof}
Under Assumption \ref{ass1}, the monotone convergence theorem gives that
\begin{align}\lb{vn}
U^{\pi}(s,x)
=&\sum_{m=0}^\infty \hat{\EE}_{(s,x)}^\pi \bigg[   ((s -\hat{S}_m)^+\wedge \hat{T}_{m+1})\hat{c}(\hat{X}_m,A_m)+\one_{[\hat{S}_m,\hat{S}_{m}+\hat{T}_{m+1})}(s)\hat{g}(\hat{X}_m,A_m) \bigg].
\end{align}
Hence, it suffices to show that there is a policy $\hat{\pi}=\{\varphi_n\}\in \Pi_{RM}$ such that
$$
\hat{\PP}_{(s,x)}^{\pi}(\hat{X}_n\in B,A_n=a,\hat{S}_n\leq t,\hat{T}_{n+1}\leq v)=\hat{\PP}_{(s,x)}^{\hat{\pi}}(\hat{X}_n\in B,A_n=a,\hat{S}_n\leq t,\hat{T}_{n+1}\leq v),
$$
for $n=0,1,\ldots$, $t,v\in \RR_+$, $B\in\Bo(\hat{E})$, and $a\in A$. Moreover, noting that (\ref{dpp-3}) implies
\begin{align*}
\hat{\EE}_{(s,x)}^{\pi}\lt[ \one_{[0,v]}(\hat{T}_{n+1})  | \hat{X}_n,A_n,\hat{S}_n\rt]= \hat{Q}(v,\hat{E}|\hat{X}_n,A_n),
\end{align*}
we need only to prove that
\be\lb{prop1-1}
\hat{\PP}_{(s,x)}^{\pi}(\hat{X}_n\in B,A_n=a,\hat{S}_n\leq t)=\hat{\PP}_{(s,x)}^{\hat{\pi}}(\hat{X}_n\in B,A_n=a,\hat{S}_n\leq t).
\de
Indeed, fix $(s,x)\in \RR_+\times \hat{E}$, and define a randomized Markov policy $\hat{\pi}:=\{\varphi_n\}$ (depending on $(s,x)$) by
\begin{align}\lb{prop1-2}
\varphi_n(a|t,y) :=
\left\{
\begin{array}{ll}
\hat{\EE}_{(s,x)}^{\pi}[\one_{\{a\}}(A_n)|\hat{S}_n=s-t,\hat{X}_n=y] , & t\leq \lz,y\in \hat{E} ; \\
\frac{1}{|A(y)|}, & t> \lz,y\in\hat{E};
\end{array}
\right.
\end{align}
where $|A(y)|$ is the cardinality of $A(y)$. We show by induction that (\ref{prop1-1}) holds with $\hat{\pi}$ defined through (\ref{prop1-2}). Clearly it holds with $n=0$. Assume that (\ref{prop1-1}) holds for some $n$ ($n\geq 0$). Then,
\begin{align}\lb{prop1-3}
&\hat{\PP}_{(s,x)}^{\pi}(\hat{X}_{n+1}\in B, \hat{S}_{n+1}\leq t) \notag\\
 =&\hat{\EE}_{(s,x)}^{\pi}\big[\hat{\EE}_{(s,x)}^{\pi}\big[ \one_{B}(\hat{X}_{n+1})\one_{[0,t]}(\hat{S}_{n+1})\big|\hat{X}_n,A_n,\hat{S}_n\big]\big]\notag\\
=&\hat{\EE}_{(s,x)}^{\pi}\big[\hat{Q}((t-\hat{S}_n)^+,B|\hat{X}_n,A_n)\big] \qd \text{ (by (\ref{dpp-3}))}\notag\\
=&\hat{\PP}_{(s,x)}^{\hat{\pi}}(\hat{X}_{n+1}\in B, \hat{S}_{n+1}\leq t). \qd \text{(by the induction hypothesis})
\end{align}
Therefore, the definition of $\hat{\pz}$ and the above equality give that
\begin{align*}
&\hat{\PP}_{(s,x)}^{\pi}(\hat{X}_{n+1}\in B, \hat{S}_{n+1}\leq t, A_{n+1}=a) \notag\\
=&\hat{\EE}_{(s,x)}^{\pi}\big[ \one_{B}(\hat{X}_{n+1})\one_{[0,t]}(\hat{S}_{n+1}) \hat{\EE}_{(s,x)}^{\pi}[\one_{\{a\}}(A_{n+1})|\hat{S}_{n+1},\hat{X}_{n+1}]\big]  \notag\\
=&\hat{\EE}_{(s,x)}^{\pi}\big[ \one_{B}(\hat{X}_{n+1})\one_{[0,t]}(\hat{S}_{n+1}) \varphi_n(a|s-\hat{S}_{n+1},\hat{X}_{n+1})\big] \qd \text{(by (\ref{prop1-2}))}\notag\\
=&\hat{\EE}_{(s,x)}^{\pi}\big[ \one_{B}(\hat{X}_{n+1})\one_{[0,t]}(\hat{S}_{n+1})  \hat{\EE}_{(s,x)}^{\hat{\pi}}[\one_{\{a\}}(A_{n+1})|\hat{S}_{n+1},\hat{X}_{n+1}]\big] \qd \text{(by (\ref{dpp-2}))}\notag\\
=&\hat{\PP}_{(s,x)}^{\hat{\pi}}(\hat{X}_{n+1}\in B, \hat{S}_{n+1}\leq t, A_{n+1}=a). \qd \text{(by(\ref{prop1-3}))}
\end{align*}
Thus the induction hypothesis is satisfied and the proof is completed.
\end{proof}

Due to Proposition \ref{smdp-prop1}, we limit our discussion to randomized Markov policies in the rest of this section. Next, we establish our main results about the SMDPs. That is, we prove that the value function is a minimum nonnegative solution to the optimality equation and that there exists an optimal deterministic stationary policy. Also, we derive an algorithm for computing optimal policies and the value function.

Let $\MM$ be the set of Borel measurable functions $v:[0,T]\times \hat{E} \to \bar{\RR}_+:=[0,\infty]$. Given any $a\in A$, define an operator $\TT^a$ from $\MM$ to itself as: for each $v\in \MM$ and $x\in \hat{E}$ if $a\notin A(x)$, $\TT^a v(\lz,x):=+\infty$, otherwise
\begin{align*}
\TT^a v(\lz,x):=&\hat{c}(x,a)\int_{0}^{\lz} (1-\hat{Q}(t,\hat{E}|x,a)\d t+ \hat{g}(x,a) (1-\hat{Q}(s,\hat{E}|x,a))\\
&+\int_{[0,s]} \int_{\hat{E}} v(s-t,y) \hat{Q}(\d t, \d y|x,a).
\end{align*}
Moreover, any $v\in \MM$, $\varphi\in \Phi$, and  $(\lz,x)\in [0,T]\times \hat{E}$, let
\begin{align*}
\TT^{\varphi}v(s,x):=\sum_{a\in A(x)}\varphi(a|s,x)\TT^a v(s,x) ,\qd \text{and}\qd\TT v(s,x):=\min_{a\in A(x)}\TT^a v(s,x).
\end{align*}

To establish the iteration algorithm for computing $U^\pi$ and $U^*$, we define a function sequence $\{U^\pi_n\}$ as following:
\begin{align*}
U_{-1}^\pi (s,x)&:=0,\\
U_n^\pi(s,x)&:=\sum_{m=0}^n \hat{\EE}_{(s,x)}^\pi \bigg[   ((s - \hat{S}_m)^+\wedge \hat{T}_{m+1})\hat{c}(\hat{X}_m,A_m)+\one_{[\hat{S}_m,\hat{S}_{m+1})}(s)\hat{g}(\hat{X}_m,A_m) \bigg]
\end{align*}
for every $(s,x)\in[0,T]\times \hat{E}$ and $n\geq 0$. Clearly, $U^\pi_n(s,x)\leq U^\pi_{n+1}(s,x)$ for every $n\geq- 1$, and moreover, it follows from (\ref{vn}) that $\lim_{n \to \infty }U^\pi_n(s,x)=U^\pi(s,x)$.

The following lemma is basic to our results.

\begin{lem}\lb{smdp-lemma1}
Suppose that Assumption \ref{ass1} holds. Let $\pi=\{\varphi_n,n\geq 0\}\in \Pi_{RM}$ be arbitrary.
\begin{itemize}
\item [\rm (a)] For each $n\geq -1$, $U^\pi_{n+1}=\TT^{\varphi_0}U^{^{(1)}\pi}_n$ and $U^\pi=\TT^{\varphi_0}U^{^{(1)}\pi}$, where $^{(1)}\pi=\{\varphi_n,n\geq 1\}$.
\item [\rm (b)] In particular, for each $\varphi\in \Phi$, $U^{\varphi}_{n+1}=\TT^{\varphi}U_n^{\varphi}$ and $U^{\varphi}=\TT^{\varphi}U^{\varphi}$.
\end{itemize}
\end{lem}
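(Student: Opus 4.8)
\medskip\noindent\textbf{Proof proposal.} The plan is to prove (a) by conditioning on the first sojourn time $\hat T_1$ and first post-jump state $\hat X_1$, and to obtain (b) as its stationary specialization. Fix $\pi=\{\varphi_n,n\ge 0\}\in\Pi_{RM}$ and $(s,x)\in[0,T]\times\hat E$, and split the defining sum of $U^\pi_{n+1}$ into the term $m=0$ and the tail $\sum_{m=1}^{n+1}$. For $m=0$ one has $\hat S_0=0$, $\hat X_0=x$, so $A_0\sim\varphi_0(\cdot|s,x)$ by (\ref{dpp-2}) and, given $A_0=a$, the law of $\hat T_1$ has distribution function $\hat Q(\cdot,\hat E|x,a)$ by (\ref{dpp-3}); a layer-cake computation then gives
\begin{align*}
\hat\EE^\pi_{(s,x)}[(s\wedge\hat T_1)\hat c(x,A_0)|A_0=a]&=\hat c(x,a)\int_0^s(1-\hat Q(t,\hat E|x,a))\d t,\\
\hat\EE^\pi_{(s,x)}[\one_{[\hat S_0,\hat S_1)}(s)\hat g(x,A_0)|A_0=a]&=\hat g(x,a)(1-\hat Q(s,\hat E|x,a)),
\end{align*}
so averaging over $a\sim\varphi_0(\cdot|s,x)$ reproduces exactly the $\hat c$- and $\hat g$-terms of $\TT^{\varphi_0}v(s,x)=\sum_{a\in A(x)}\varphi_0(a|s,x)\TT^a v(s,x)$.

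For the tail the crucial ingredient — and, I expect, the main technical obstacle — is a ``fresh start'' identity: conditionally on $\hat Y_1=(x,a,t_1,x_1)$ under $\hat\PP^\pi_{(s,x)}$, the shifted marked point process $(\hat X_{n+1},A_{n+1},\hat T_{n+2})_{n\ge 0}$ is distributed as the canonical process under $\hat\PP^{^{(1)}\pi}_{(s-t_1,x_1)}$, where $^{(1)}\pi=\{\varphi_n,n\ge 1\}$. This is verified by checking that the conditional law satisfies the characterizing identities (\ref{dpp-1})--(\ref{dpp-3}) with initial pair $(s-t_1,x_1)$ and policy $^{(1)}\pi$ — here one uses that on $\{\hat T_1=t_1\}$ the shifted partial sums are $\hat S'_n=\hat S_{n+1}-t_1$, so the remaining horizon $s-\hat S_{n+1}$ equals $(s-t_1)-\hat S'_n$, together with the randomized-Markov form of the kernels $\varphi_{n+1}$ and the Markov property of $\hat Q$ — and then invoking uniqueness in the Ionescu--Tulcea theorem (as in Proposition \ref{smdp-prop1}). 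Granting this, the $m$-th summand of $U^\pi_{n+1}$ ($1\le m\le n+1$), conditioned on $\hat Y_1=(x,a,t_1,x_1)$, coincides with the $(m-1)$-th summand of $U^{^{(1)}\pi}_n(s-t_1,x_1)$, since $(s-\hat S_m)^+=((s-t_1)-\hat S'_{m-1})^+$ and $\one_{[\hat S_m,\hat S_{m+1})}(s)=\one_{[\hat S'_{m-1},\hat S'_m)}(s-t_1)$, and all these summands vanish when $t_1>s$; hence summing $m=1,\dots,n+1$ yields $\one_{\{t_1\le s\}}U^{^{(1)}\pi}_n(s-t_1,x_1)$ (note $s-t_1\in[0,s]\subset[0,T]$), and integrating against the law of $(A_0,\hat T_1,\hat X_1)$ gives $\sum_{a\in A(x)}\varphi_0(a|s,x)\int_{[0,s]}\int_{\hat E}U^{^{(1)}\pi}_n(s-t,y)\hat Q(\d t,\d y|x,a)$, the remaining term of $\TT^{\varphi_0}$.

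Combining the two pieces yields $U^\pi_{n+1}=\TT^{\varphi_0}U^{^{(1)}\pi}_n$ for every $n\ge-1$ (for $n=-1$ the tail is empty and $U^{^{(1)}\pi}_{-1}\equiv 0$, leaving only the $m=0$ computation). Since $U^{^{(1)}\pi}_n\uparrow U^{^{(1)}\pi}$ and $v\mapsto\int_{[0,s]}\int_{\hat E}v(s-t,y)\hat Q(\d t,\d y|x,a)$ is monotone, the monotone convergence theorem gives $\TT^aU^{^{(1)}\pi}_n\uparrow\TT^aU^{^{(1)}\pi}$ for each $a\in A(x)$, hence $\TT^{\varphi_0}U^{^{(1)}\pi}_n\uparrow\TT^{\varphi_0}U^{^{(1)}\pi}$ because $A(x)$ is finite; letting $n\to\infty$ and using $U^\pi_{n+1}\uparrow U^\pi$ proves $U^\pi=\TT^{\varphi_0}U^{^{(1)}\pi}$, establishing (a). Finally, (b) is the case $\varphi_n\equiv\varphi$, for which $\varphi_0=\varphi$ and $^{(1)}\pi=\varphi$, so (a) reads $U^\varphi_{n+1}=\TT^\varphi U^\varphi_n$ and $U^\varphi=\TT^\varphi U^\varphi$.
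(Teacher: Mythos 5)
Your proposal is correct and follows essentially the same route as the paper: both split off the $m=0$ term (computed by the layer-cake/Fubini identity) and handle the tail by conditioning on $(A_0,\hat T_1,\hat X_1)$ and identifying the conditional law of the shifted process with $\hat\PP^{^{(1)}\pi}_{(s-t,y)}$, then pass to the limit by monotone convergence using the finiteness of $A(x)$. The only difference is presentational: you spell out the ``fresh start'' identity via uniqueness in the Ionescu--Tulcea construction, a step the paper compresses into ``using that $\pi$ is Markovian and (\ref{dpp-1})--(\ref{dpp-3})''.
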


\begin{proof}
{\rm (a)} First, using that $\pz$ is Markovian and (\ref{dpp-1})-(\ref{dpp-3}), we have
\begin{align*}
&\sum_{m=1}^{n+1}\hat{\EE}_{(s,x)}^\pi \bigg[   ((s - \hat{S}_m)^+\wedge \hat{T}_{m+1})\hat{c}(\hat{X}_m,A_m)+\one_{[\hat{S}_m,\hat{S}_{m+1})}(s)\hat{g}(\hat{X}_m,A_m) \bigg]\\
=&\sum_{m=1}^{n+1}\sum_{a\in A(x)}\varphi_0(a|\lz,x)\int_{\RR_+}\int_{\hat{E}} \hat{Q}(\d t,\d y|x,a)\bigg[  \hat{\EE}_{(s,x)}^\pi \bigg[  ((s -\hat{S}_m)^+\wedge \hat{T}_{m+1})\hat{c}(\hat{X}_m,A_m)\\
& \qd  +\one_{[\hat{S}_m,\hat{S}_{m+1})}(s)\hat{g}(\hat{X}_m,A_m) \big|\hat{X}_1=y,\hat{T}_1=t\bigg]\bigg]\\
=&\sum_{a\in A(x)}\varphi_0(a|\lz,x)\int_{\RR_+}\int_{\hat{E}} \hat{Q}(\d t,\d y|x,a)\sum_{m=1}^{n+1}\hat{\EE}_{((s-t),y)}^{^{(1)}\pi}\bigg[  ((s-t) -\hat{S}_{m-1})^+\wedge \hat{T}_{m})\\
& \qd\times\hat{c}(\hat{X}_{m-1},A_{m-1})  +\one_{[\hat{S}_{m-1},\hat{S}_{m})}(s-t)\hat{g}(\hat{X}_{m-1},A_{m-1})\bigg]\\
=&\sum_{a\in A(x)}\varphi_0(a|s,x)\int_{[0,s]}\int_{\hat{E}}U_n^{^{(1)}\pz}(s-t,y) Q(\d t,\d y|x,a),
\end{align*}
where the last equality is due to that if $t>s$, $\one_{[\hat{S}_{m-1},\hat{S}_{m})}(s-t)=0$ and $((s-t) -\hat{S}_{m-1})^+=0$. Then, we have
\begin{align*}
U^\pi_{n+1}(\lz,x) 
=&\sum_{a\in A(x)}\varphi_0(a|s,x)\bigg[\hat{c}(x,a)\int_{\RR_+} (t\wedge s)\hat{Q}(\d t,\hat{E}|x,a)\\
&+\hat{g}(x,a)\int_{\RR_+}\one_{[0,t)}(s)\hat{Q}(\d t,\hat{E}|x,a)
+\int_{[0,s]}\int_{\hat{E}} U^{^{(1)}\pi}_{n}(s-t,y)\hat{Q}(\d t, \d y|x,a)\bigg]\\
=&\sum_{a\in A(x)}\varphi_0(a|s,x)\bigg[ \hat{c}(x,a)\int_{0}^{\lz} (1-\hat{Q}(t,\hat{E}|x,a)\d t+ \hat{g}(x,a) (1-\hat{Q}(s,\hat{E}|x,a))\\
&+\int_{[0,s]} \int_{\hat{E}} U^{^{(1)}\pi}_{n}(s-t,y) \hat{Q}(\d t, \d y|x,a) \bigg]\\
=&\TT^{\varphi_0}U^{^{(1)}\pi}_{n}(s,x).
\end{align*}
Further, noting that $A(x)$ is finite, under Assumption \ref{ass1}, the monotone convergence theorem implies $U^\pi=\TT^{\varphi_0}U^{^{(1)}\pi}$.

{\rm (b)} It immediately follows from part {\rm (a)}. 
\end{proof}

At the end of this section, we state our main results about SMDPs with the terminal cost function. In detail, we provide an iterative algorithm for computing the value function $U^*$, and give the optimality equation and the existence of optimal policies.

\begin{thm}\lb{smdp-thm1}
Suppose that Assumption \ref{ass1} holds. Then the following statements hold.
\begin{itemize}
\item [\rm (a)] (Value iteration) For every $n\geq 0$, let $U^*_{0}=:0$ and $U^*_{n+1}:=\TT U^*_{n}$. Then, $U^*=\lim_{n \to \infty}U^*_n\in\MM$.
\item [\rm (b)] (Optimality equation) $U^*$ is the minimum solution in $\MM$ to the optimality equation $U^*=\TT U^*$, that is, if $u\in \MM$ satisfies that $u=\TT u$, then $u\geq U^*$.
\item [\rm (c)] (Optimal policy) There exists an $f^*\in \FF$ such that $U^*=\TT^{f^*} U^*$, and such a policy $f^*\in \FF$ is $T$-horizon optimal.
\end{itemize}	
\end{thm}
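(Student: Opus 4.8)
The plan is to run the standard minimal nonnegative solution argument, organised around the truncated cost functions $U^\pi_n$ and the value iteration functions $U^*_n$, with Lemma~\ref{smdp-lemma1} as the bridge between them. First I would record the elementary structural facts: each of $\TT^a$, $\TT^\varphi$, $\TT$ is monotone on $\MM$ and maps $\MM$ into $\MM$; the only $v$-dependent part of $\TT^a v(s,x)$ is the integral of $v$ against the sub-stochastic kernel $\hat{Q}(\d t,\d y\,|\,x,a)$, so by monotone convergence $v_k\uparrow v$ in $\MM$ forces $\TT^a v_k\uparrow\TT^a v$; and, since $A(x)$ is finite and $A$ denumerable, $\TT v(s,x)=\min_{a\in A(x)}\TT^a v(s,x)$ is attained, with a measurable minimiser $f(s,x)$ obtained by picking the least-indexed minimising action. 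Finally, $U^*_0=0\le\TT 0=U^*_1$, so by monotonicity $\{U^*_n\}$ increases to some $W\in\MM$.

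For part~(a) I would prove $W=U^*$ by two inequalities. The bound $W\le U^*$ follows from the claim that $U^*_{n+1}\le U^\pi_n$ for every $n\ge-1$ and every $\pi=\{\varphi_m\}\in\Pi_{RM}$, shown by induction on $n$: the case $n=-1$ is $0\le0$, and for $n\ge0$, writing ${}^{(1)}\pi=\{\varphi_m,m\ge1\}$, Lemma~\ref{smdp-lemma1}(a) gives $U^\pi_n=\TT^{\varphi_0}U^{{}^{(1)}\pi}_{n-1}\ge\TT^{\varphi_0}U^*_n\ge\TT U^*_n=U^*_{n+1}$, where the first step uses the induction hypothesis $U^*_n\le U^{{}^{(1)}\pi}_{n-1}$ with monotonicity, and the second uses $\TT^{\varphi_0}v\ge\TT v$ (a consequence of $\varphi_0(A(x)|s,x)=1$). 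Letting $n\to\infty$ and recalling $U^\pi_n\uparrow U^\pi$ yields $W\le U^\pi$ for all $\pi\in\Pi_{RM}$, and Proposition~\ref{smdp-prop1} upgrades this to $W\le U^*$. For the reverse inequality I first let $n\to\infty$ in $U^*_{n+1}=\TT U^*_n$: the monotone convergence fact applied inside the finite minimum gives $\TT U^*_n\uparrow\TT W$, so $W=\TT W$. Picking a measurable selector $f^*\in\FF$ with $\TT^{f^*}W=\TT W=W$, Lemma~\ref{smdp-lemma1}(b) gives $U^{f^*}_{n+1}=\TT^{f^*}U^{f^*}_n$ with $U^{f^*}_{-1}=0$; since $U^{f^*}_{-1}=0\le W$ and $U^{f^*}_{n+1}=\TT^{f^*}U^{f^*}_n\le\TT^{f^*}W=W$, induction gives $U^{f^*}\le W$, hence $U^*\le U^{f^*}\le W$. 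Combining, $W=U^*=U^{f^*}$.

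Part~(b) is then immediate: $U^*=W=\TT W=\TT U^*$, so $U^*$ solves the optimality equation, and if $u\in\MM$ satisfies $u=\TT u$ then $U^*_0=0\le u$ and $U^*_{n+1}=\TT U^*_n\le\TT u=u$ by monotonicity, whence $U^*=\lim_n U^*_n\le u$; thus $U^*$ is minimal. For part~(c), the selector $f^*$ constructed above already satisfies $U^*=\TT U^*=\TT^{f^*}U^*$, and we showed $U^{f^*}\le W=U^*$ on $[0,T]\times\hat{E}$; since trivially $U^{f^*}\ge\inf_{\pi\in\Pi}U^\pi=U^*$, we get $U^{f^*}=U^*$ on $[0,T]\times\hat{E}$, i.e. $f^*$ is $T$-horizon optimal.

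The steps I expect to need the most care are the two ``soft analysis'' ingredients used above: that $\TT$ sends $\MM$ into $\MM$ and is continuous along increasing sequences (requiring joint measurability of $(s,x)\mapsto\TT^a v(s,x)$, a Fubini/measurable-kernel computation, together with the monotone convergence theorem applied to the $v$-integral), and the measurable selection of $f^*$. Both are routine given that $A(x)$ is finite and $A$ is denumerable — the minimum is attained pointwise and a least-index rule is measurable — but they are the points worth spelling out rather than asserting.
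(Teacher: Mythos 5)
Your proposal is correct and follows essentially the same route as the paper: the inequality $U^*_{n+1}\le U^\pi_n$ via Lemma \ref{smdp-lemma1}(a) and induction, the finiteness-of-$A(x)$ argument to pass the minimum through the increasing limit (the paper phrases this as extracting a subsequence with a constant minimising action, which is exactly what your ``monotone convergence inside the finite minimum'' hides), a measurable selector $f^*$, and the induction $U^{f^*}_n\le W$ to close the loop. The only cosmetic differences are that you establish $W=\TT W$ in full before part (b) where the paper only needs $u^*\ge\TT u^*$ at that stage, and that you invoke Proposition \ref{smdp-prop1} explicitly where the paper does so implicitly.
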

\prf
{\rm (a)} Since $\hat{c}(x,a)$ and $\hat{g}(x,a)$ are nonnegative and $\TT$ is a nondecreasing map from $\MM$ to $\MM$, we obtain $U^*_{n+1}(\lz,x)\geq U_n^*(\lz,x)$ and $U^*_n\in \MM$ for all $n\geq 0$ by $U_{0}^*=0$. Therefore, $u^*:=\lim_{n \to \infty}U^*_n\in \MM$. To prove part ({\rm a}), it remains to establish that $u^*=U^*$. We show $u^*\leq U^*$ and $U^*\geq u^*$, respectively.

To show $u^*\leq U^*$, we prove that
\be\lb{s2-thm1-1}
U^*_{n+1}\leq U_n^\pz, \qd \forall n\geq -1 ,\pz\in \ppz_{RM},
\de
and do this by induction. It is obviously true for $n\geq -1$. Suppose that $U^*_{n+1}\leq U_n^\pz$ for some $n\geq -1$ and any $\pz\in \ppz_{RM}$. Then, fixed any $\pz=\{\varphi_n,n\geq 0\}\in \ppz_{RM}$, by Lemma \ref{smdp-lemma1} part (a), it holds  that
$$
U_{n+1}^\pz=\TT^{\varphi_0}U^{^{(1)}\pz}_n \geq \TT^{\varphi_0}U^*_{n+1} \geq \TT U^*_{n+1}=U^*_{n+2},
$$
where ${^{(1)}\pz}=\{\varphi_n,n\geq 1\} \in \ppz_{RM}$ and the second and third inequalities follow from inductive hypothesis and the definitions of $\TT$ and $\TT^{\varphi_0}$, respectively. Hence (\ref{s2-thm1-1}) holds, and thus $u^*\leq U^*$.

We now show that $u^*\geq U^*$. For every fixed $(\lz,x)\in [0,T]\times \hat{E}$, since $A(x)$ is finite, there exists an $a^*_n(\lz,x)$ satisfying that $\TT^{a^*_n(\lz,x)}U^*_{n}(\lz,x)=\TT U^*_{n}(\lz,x)=U^*_{n+1}(\lz,x)$. Using that $A(x)$ is finite again, there is a subsequence $\{n_k\}$ of $\{n\}$ and $a^*(\lz,x)\in A(x)$ such that $a_{n_k}^*(\lz,x)=a^*(\lz,x)$ for all $n_k$. Hence, $U^*_{n_k+1}(\lz,x)=\TT^{a^*(\lz,x)}U^*_{n_k}(\lz,x)$. Letting $n_k \to \infty$, it holds that $u^*(\lz,x)=\TT^{a^*(\lz,x)}u^*(\lz,x)$ by monotone convergence theorem, which implies that $u^*(\lz,x)\geq \TT u^*(\lz,x)$. By the arbitrariness of $(\lz,x)$, we have $u^*\geq \TT u^*$. On the other hand, the finiteness of $A(x)$ and measurable selection theorem (see \cite[Proposition D.5]{HL96}) ensure that there is an $f^*\in \FF$ such that
$$
\TT^{f^*}u^*=\TT u^* \leq u^*.
$$
Moreover, Since $u^*\geq 0=U^{f^*}_{-1}$, by induction it holds that $u^*\geq \TT^{{f^*}}U_{n}^{f^*}= U_{n+1}^{f^*}$ for all $n\geq -1$, which implies
\be\lb{s3-thm-f}
u^*\geq \lim_{n\to \infty}U_n^{f^*}=U^{f^*} \geq U^* \geq u^*
\de
Therefore, $u^*=U^*$.

{\rm (b)} For every $\pz=\{\varphi_n,n\geq 0\}\in \ppz_{RM}$, by Lemma \ref{smdp-lemma1} (a), it holds that
$$
U^{\pz}=\TT^{\varphi_0}U^{^{(1)}\pz}\geq \TT^{\varphi_0}U^* \geq \TT U^*.
$$
Then, the arbitrariness of $\pz$ implies that $U^* \geq \TT U^*$. On the other hand,
$$
U^*_{n+1}(\lz,x)=\TT U^*_{n}(\lz,x) \leq \TT^a U^*_{n}(\lz,x), \qd \forall (\lz,x)\in [0,T]\times \hat{E}, a\in A(x), n\geq 0.
$$
Hence, by the monotone convergence theorem, we obtain that $U^*(\lz,x)\leq \TT^a U^*(\lz,x)$, and so $U^*(\lz,x)\leq  \TT U^*(\lz,x) $. Therefore, $U^*=\TT U^*$.

Let $u\in \MM$ be an arbitrary solution to the equation $u=\TT u$. Since $u\geq 0=U^*_{0}$ and $u=\TT^n u$, it follows from part (a) that
$$u=\lim_{n\to \infty}\TT^{n+1} u\geq \lim_{n\to \infty}\TT^{n+1}  U^*_{0}=\lim_{n \to \infty } U^*_n=U^*,$$
where $T^1 v=Tv$ and $T^n v=T(T^{n-1}v)$ for all $n\geq 1$ and $v\in \MM$. This means that $U^*$ is the minimum solution in $\MM$ to the optimality equation.

{\rm (c)} By the proof of part (a), there is an $f^*\in \FF$ such that $\TT U^*=\TT^{f^*}U^*$. Therefore, part (b) gives $U^*=\TT^{f^*}U^*$. Hence, $f^*$ is $T$-horizon optimal by part (a) and (\ref{s3-thm-f}).\deprf

\begin{rem}
In particular, if $\hat{E}$ is denumerable and $\hat{g}=0$, the above results are same as \cite[Theorem 3.1 and Theorem 3.2]{HG11}.
\end{rem}

\section{Existence and computation of optimal stopping times}
In this section, we introduce the equivalent SMDPs corresponding to the original optimal stopping problem of SMPs in section 2. And then, we show that for every stopping time $\tz$ and $s\in [0,T]$, there is a policy $\pi_\tz$ such that the $s$-horizon expected cost of $\tz$ is equal to $s$-horizon expected cost of the policy $\pi_\tz$. Hence, we can analyze the value function $V^*$ and the optimal stopping time $\tz^*$ of SMPs through the conclusions of SMDPs given in section 3. Note that the regular condition (Assumption \ref{s2-ass1}) is needed.

Intuitively, in the SMPs, ``continue" or ``stop" can be considered as a special action in the corresponding SMDPs. This intuition gives us an idea to construct the SMDPs. The details about the constructions of SMDPs are given as follows. Hence, the model of the corresponding SMDPs is
\be\lb{m-smdp}
\lt\{\hat{E}, A, (A (x) \subset A,x\in \hat{E}), \hat{Q}_T (\cdot, \cdot |x, a), \hat{c}(x, a), \hat{g}(x,a) \rt\}.
\de
where the state space $\hat{E} := E \cup \{ \Delta \}$ includes the state space $E$ of SMPs and a virtual state $\Delta$. $A(x)$, denoting the set of admissible actions at state $x\in  \hat{E}$, is defined as
$$
A (x):=
\left\{
\begin{array}{ll}
\{0, 1 \}, & x\in E; \\
\{ 1 \}, & x= \Delta,
\end{array}
\right.
$$
where the action $0$ means continuation and $1$ means stop. The action space $A =\{0,1\}$ is finite. Then the set of admissible state-action pairs $K=(E\times A)\cup \{(\Delta,1)\}$ is a Borel subset of $\hat{E}\times A$. For each $t\geq 0$ and $B\in \Bo(\hat{E})$, the semi-Markov kernel $\hat{Q}_T (t, B |x, a)$ is given by
\be\lb{Q}
\hat{Q}_T (t, B | x, a) :=
\left\{
\begin{array}{ll}
Q(t, B\setminus \{\Delta\} | x), & x \in E, a = 0; \\
	\one_{ \lt[T+1, +\infty \rt) } (t)\delta_{\Delta}(B), & x \in \hat{E},  a=1,
\end{array}
\right.
\de
where $	Q(t, B | x)$ is the kernel of SMPs given in (\ref{smp}). Finally, the cost rate function and terminal cost function of SMDP are defined as
\be\lb{c}
\hat{c}(x, a) :=
\left\{
\begin{array}{ll}
	c (x), & x \in E, a = 0; \\
	0, & \text{otherwise},
\end{array}
\right.
\de
\be\lb{g}
\hat{g}(x, a) :=
\left\{
\begin{array}{ll}
	g (x), & x \in E, a = 1; \\
	0, & \text{otherwise},
\end{array}
\right.
\de
where $c$ and $g$ are the cost rate function and terminal cost function of SMPs, respectively. Since $c$ and $g$ are measurable on $E$, $\hat{c}(x,a)$ and $\hat{g}(x,a)$ are measurable on $K$.  Firstly, we give a lemma to show that the above model satisfies Assumption \ref{ass1}.

\begin{lem}\lb{s4-lem1}
Suppose that Assumption \ref{s2-ass1} holds. For the SMDPs as in (\ref{m-smdp}), Assumption \ref{ass1} is fulfilled.
\end{lem}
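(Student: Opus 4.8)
The plan is to deduce Assumption~\ref{ass1} for the model~(\ref{m-smdp})--(\ref{g}) from the single regularity bound~(\ref{ass}) in Assumption~\ref{s2-ass1}. The point is that the constructed semi-Markov kernel $\hat{Q}_T$ should keep a uniform amount of mass away from arbitrarily small sojourn times, \emph{uniformly over all admissible state-action pairs} $(x,a)\in K$: on the ``continuation'' branch $a=0$ it reproduces the SMP kernel $Q$, while on the ``stop'' branch $a=1$ it forces a sojourn of length $T+1$. Because such a bound does not involve the planning horizon, it will settle Assumption~\ref{ass1} for every $\pi\in\Pi$ and every $(s,x)\in\RR\times\hat{E}$ simultaneously (in particular for negative $s$ and for $x=\Delta$).

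Concretely, I would first fix $\dz>0$ and $\epsilon>0$ as in Assumption~\ref{s2-ass1} and put $\dz':=\min\{\dz,(T+1)/2\}>0$. For $x\in E$ and $a=0$, definition~(\ref{Q}) together with the monotonicity of $t\mapsto Q(t,E\mid x)$ gives $\hat{Q}_T(\dz',\hat{E}\mid x,0)=Q(\dz',E\mid x)\le Q(\dz,E\mid x)\le 1-\epsilon$; for any $(x,a)\in K$ with $a=1$ (which covers $x=\Delta$), definition~(\ref{Q}) gives $\hat{Q}_T(\dz',\hat{E}\mid x,1)=\one_{[T+1,\infty)}(\dz')=0$ since $\dz'<T+1$. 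Hence $\hat{Q}_T(\dz',\hat{E}\mid x,a)\le 1-\epsilon$ for all $(x,a)\in K$, which is precisely the SMDP analogue of~(\ref{ass}).

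It then remains to convert this bound into non-explosion. Fixing $(s,x)$ and $\pi$, equation~(\ref{dpp-3}) yields $\hat{\PP}_{(s,x)}^{\pi}(\hat{T}_{n+1}>\dz'\mid \hat{Y}_n,A_n)=1-\hat{Q}_T(\dz',\hat{E}\mid \hat{X}_n,A_n)\ge\epsilon$ for every $n\ge0$; thus, with $B_k:=\{\hat{T}_k>\dz'\}$, the conditional probabilities $\hat{\PP}_{(s,x)}^{\pi}(B_{k+1}\mid\sigma(\hat{Y}_k,A_k))$ are bounded below by $\epsilon$, so $\sum_k\hat{\PP}_{(s,x)}^{\pi}(B_{k+1}\mid\sigma(\hat{Y}_k,A_k))=+\infty$ $\hat{\PP}_{(s,x)}^{\pi}$-a.s. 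By the conditional Borel--Cantelli lemma (L\'evy's extension) the events $B_k$ then occur for infinitely many $k$, hence $\hat{S}_n\ge\dz'\sum_{k=1}^{n}\one_{B_k}\to\infty$, and since $(\hat{S}_n)_n$ is non-decreasing this gives $\hat{\PP}_{(s,x)}^{\pi}(\lim_{n}\hat{S}_n=\infty)=1$. Equivalently, one may estimate $\hat{\PP}_{(s,x)}^{\pi}(\hat{S}_n\le m)$ by the lower tail of a $\mathrm{Bin}(n,\epsilon)$ random variable and let $n\to\infty$ and then $m\to\infty$, exactly as in the proof of the corresponding statement for SMPs in \cite{HG11}. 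I expect the only delicate point to be the calibration of $\dz'$: it must lie strictly below $T+1$ so that the ``stop'' branch of~(\ref{Q}) still carries mass at most $1-\epsilon$ at time $\dz'$, while staying at most $\dz$ so that Assumption~\ref{s2-ass1} applies on the ``continuation'' branch --- the displayed minimum achieves both; everything after that is the standard non-explosion argument.
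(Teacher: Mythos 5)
Your proposal is correct and follows essentially the same route as the paper: fix $\dz'$ no larger than $\dz$ and strictly below $T+1$ (the paper takes $\min\{\dz,\tfrac12\}$, you take $\min\{\dz,\tfrac{T+1}{2}\}$; both work since $T\ge 0$), check the two branches of~(\ref{Q}) to get the uniform bound $\hat{Q}_T(\dz',\hat{E}\mid x,a)\le 1-\epsilon$ on $K$, and then conclude non-explosion. The only difference is that the paper simply cites \cite[Proposition 2.1]{HG11} for the last step, whereas you spell out the standard conditional Borel--Cantelli (equivalently, binomial tail) argument.
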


\begin{proof}
Under Assumption \ref{s2-ass1}, there exist $\dz>0$ and $\epsilon > 0$ such that (\ref{ass}) holds. Let $\hat{\delta}= \min\{\dz,\frac{1}{2}\}$, and thus
\begin{equation*}
\hat{Q}_T (\hat{\delta}, \hat{E} | x, a) =
\left\{
\begin{array}{ll}
Q(\hat{\delta},E | x) \leq 	Q(\delta, E | x) \leq  1-\epsilon, & x \in E,  a = 0; \\
\one_{ \lt[T+1, +\infty \rt) } (\hat{\delta}) =0 \leq 1-\epsilon, & i \in \hat{E},  a=1.
\end{array}
\right.
\end{equation*}
Hence, Assumption \ref{ass1} is fulfilled by \cite[Proposition 2.1]{HG11}.
\end{proof}

Next step, we will show the relationship between the stopping times $\tz \in \ggz$ of SMPs and the policies $\pz \in \ppz_{DH}$ of SMDPs as in (\ref{m-smdp}). To do so, for all $n\geq 0$ and history of SMPs up to the $n$-th jump epoch $h_n=(x_0,t_1,\ldots,x_{n-1},t_n,x_n)\in {H}_n$, let
\be\lb{Mn}
 h_n^0= \lt(x_0, 0, t_1, x_1, \ldots,0, t_n, x_n \rt) \in \hat{H}_n.
\de
The action $0$ (means continuation) added to the equation (\ref{Mn}) indicates that the system has been running incessantly before the $n$-th jump epoch. Obviously, by the definition of $h_n^0$, it holds that
\be\lb{HN0}
C^0:=\{h_n^0\,|\,h_n\in C\}\in \Bo(\hat{H}_n), \qd \forall C\in \Bo({H}_n).
\de
In particular, ${H}_n^0=E\times (\{0\}\times \RR_+\times E)^n \in \Bo(\hat{H}_n)$.

\begin{defn}\lb{s3-t-p}
Given any deterministic policy $\pi=\{d_n,n\geq 0\}\in \ppz_{DH}$ defined in Definition \ref{pclass} (c) and $s \in \RR$, define
$$
\tz_\pz^s (\oz) := \inf \bigg\{ n \in \mathbb{N}\,\bigg|\, d_n (\lz-S_n(\oz), Y_n^0 (\oz)) = 1 \bigg\},\qd \oz=(x_0,t_1,\ldots,x_n,t_{n+1},\ldots) \in\ooz,
$$
where $\inf \{ \emptyset \} := +\infty$ and $ Y_n^0:=(X_0,0,T_1,X_1,\ldots,0,T_n,X_n)$. Then $\tau_\pi^s$ is called the stopping time induced by the policy $\pi$ and $s$.
\end{defn}

\begin{lem}\lb{s3-lem2}
For each deterministic policy $\pi=\{d_n,n\geq 0\}$ and $s \in \RR$, the induced stopping time $\tau_\pi^s$ is a stopping time.	
\end{lem}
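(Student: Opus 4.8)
The plan is to verify the defining condition of Definition \ref{s3-defn1} directly: I must show that for every $n \in \mathbb{N}$ the event $\{\tz_\pz^s = n\}$ belongs to $\fq_n = \sigma(T_0, X_0, \ldots, T_n, X_n)$. By the definition of $\tz_\pz^s$ in Definition \ref{s3-t-p}, this event is
\[
\{\tz_\pz^s = n\} = \bigcap_{k=0}^{n-1} \{ d_k(s - S_k, Y_k^0) \neq 1 \} \cap \{ d_n(s - S_n, Y_n^0) = 1 \},
\]
so it suffices to show that for each $k \le n$ the random variable $d_k(s - S_k, Y_k^0)$ is $\fq_n$-measurable (indeed $\fq_k$-measurable, which is even stronger and matches the intuition that the decision at the $k$-th epoch uses only information available then).

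The key steps are as follows. First, $S_k = \sum_{j=0}^k T_j$ is a measurable function of $(T_0, \ldots, T_k)$, hence $\fq_k$-measurable, and consequently $s - S_k$ is $\fq_k$-measurable for the fixed constant $s$. Second, the map $\oz \mapsto Y_k^0(\oz) = (X_0, 0, T_1, X_1, \ldots, 0, T_k, X_k)$ is $\fq_k$-measurable as a map into $\hat{H}_k$: each coordinate is either one of $T_1, \ldots, T_k, X_0, \ldots, X_k$ (which generate $\fq_k$) or the constant $0$, and inserting constant coordinates is a measurable operation; here one uses that $H_k^0 \in \Bo(\hat{H}_k)$ as noted in \eqref{HN0} so that $Y_k^0$ indeed takes values in the relevant measurable subspace. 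Third, since $\pi = \{d_n\} \in \ppz_{DH}$ is a deterministic policy, each $d_k : \RR \times \hat{H}_k \to A$ is by definition measurable (Definition \ref{pclass}(c)), so the composition $\oz \mapsto d_k(s - S_k(\oz), Y_k^0(\oz))$ is a measurable function of the $\fq_k$-measurable pair $(s - S_k, Y_k^0)$, hence $\fq_k$-measurable. Since $A = \{0,1\}$ (or any Borel space) with its Borel $\sigma$-algebra makes $\{d_k(\cdot) = 1\}$ and $\{d_k(\cdot) \neq 1\}$ measurable sets, each event in the intersection above lies in $\fq_k \subset \fq_n$, and a finite intersection of such events lies in $\fq_n$. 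This gives $\{\tz_\pz^s = n\} \in \fq_n$, which is the required condition.

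This is essentially a bookkeeping argument and there is no serious obstacle; the only point requiring mild care is the second step — checking that splicing the constant entries $0$ into the history to form $Y_k^0$ is genuinely measurable into $\hat{H}_k$ and that the composition with $d_k$ is legitimate, which is exactly what \eqref{Mn}–\eqref{HN0} were set up to guarantee. One should also note that the case $n = +\infty$ needs no separate treatment, since a random variable valued in $\mathbb{N} \cup \{+\infty\}$ is a stopping time precisely when $\{\tz = n\} \in \fq_n$ for all finite $n$ (equivalently $\{\tz = +\infty\} = \bigcap_n \{\tz \neq n\}$ is then automatically measurable in $\fq_\infty$), so the verification above already completes the proof.
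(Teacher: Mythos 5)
Your proof is correct and follows essentially the same route as the paper's: both write $\{\tz_\pz^s = n\}$ as the finite intersection of the events where $d_k(s-S_k,Y_k^0)$ is not the stop action for $k<n$ together with the event where $d_n(s-S_n,Y_n^0)=1$, and conclude membership in $\fq_n$ from the measurability of $S_k$, $Y_k^0$ and $d_k$. The paper states this in one line; your version merely spells out the measurability bookkeeping (including the splicing of the constant $0$ coordinates via \eqref{HN0}) in more detail, and your observation that $n=+\infty$ needs no separate treatment is consistent with Definition \ref{s3-defn1}.
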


\begin{proof}
Note that for each $n \geq 0$, the random variables $Y_n^0$ and $S_n$, and the function $d_n$ are measurable in their corresponding spaces. Hence, we have
$$
\{ \tz_\pz^s = n \} = \big( \cap_{k=0}^{n-1} \{ d_k ((s -S_k), Y_k^0) = 0 \} \big) \cap \big\{ d_n ((s- S_{n}) ,Y_n^0) = 1 \big\} \in \sigma(Y_n)=\fq_n,
$$
which implies that $\tau_\pi^s$ is a stopping time.
\end{proof}

We introduce a subclass $\ppz^0_{DH}$ of $\ppz_{DH}$ by
$$
\ppz^0_{DH}:=\{\pz=\{d_n,n\geq 0\}\in \ppz_{DH}\,|\,d_n(0,\hat{h}_n)=0,\forall n\geq 0,\hat{h}_n\in {H}^0_n\}.
$$
We now give a key theorem which establishes the relationship between the $s$-horizon expected cost of polices in $\ppz^0_{DH}$ and that of the stopping times induced by the polices.

\begin{thm}\lb{s3-thm1}
Suppose that Assumption \ref{s2-ass1} holds. For any $\pi=\{d_n,n\geq 0\}\in \ppz_{DH}^0$, it holds that
$$
U^{\pi}(s,x)=V^{\tau_\pi^s}(s,x), \qd \forall x \in E, \lz\in [0,T],
$$
where $\tau_{\pi}^s $ is the stopping time induced by $\pz$ and $s$, and $V^{\tau_\pi^s}(s,x)$ is the $s$-horizon expected cost of $\tau_{\pi}^s$.
\end{thm}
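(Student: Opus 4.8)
The plan is to set up a trajectory-level correspondence between the SMDP in (\ref{m-smdp}) run under $\hat\PP^\pi_{(s,x)}$ and the SMP run under $\PP_x$, under which both $U^\pi(s,x)$ and $V^{\tz_\pi^s}(s,x)$ become the expectation of one and the same functional of the history up to the stopping epoch; one then transfers the equality of the two expectations through an identity of laws. Throughout, Lemma \ref{s4-lem1} supplies Assumption \ref{ass1}, so $\lim_n\hat S_n=\infty$ and $\hat X(s)$ is well defined.

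First I would record what membership in $\ppz^0_{DH}$ buys. Write $\tz:=\tz_\pi^s$, and inside the SMDP let $\hat\tz$ denote the first decision epoch at which the action equals $1$. Since $d_n(0,h_n^0)=0$ for every $h_n^0\in H_n^0$, the event $\{\tz=n,\ S_n=s\}$ is empty for every $n$, so $S_\tz\neq s$ surely on $\{\tz<\infty\}$; the identical reasoning inside the SMDP gives $\hat S_{\hat\tz}\neq s$ surely. I expect this boundary bookkeeping to be the most delicate point, and it is precisely where the subclass $\ppz^0_{DH}$ — rather than all of $\ppz_{DH}$ — is needed.

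Next I would reduce the two cost functionals to a common form. Before $\hat\tz$ all actions are $0$, and by (\ref{Q}) the kernel $\hat Q_T(\cdot,\cdot\,|\,y,0)$ agrees with $Q(\cdot,\cdot\,|\,y)$ on $\Bo(E)$, so up to $\hat S_{\hat\tz}$ the SMDP state stays in $E$ with action $0$, where $\hat c(\hat X(t),A(t))=c(\hat X(t))$ and $\hat g=0$; at the stopping epoch $A_{\hat\tz}=1$ forces $\hat c(\hat X_{\hat\tz},1)=0$ and $\hat g(\hat X_{\hat\tz},1)=g(\hat X_{\hat\tz})$, while the built-in feature $\hat T_{\hat\tz+1}\geq T+1>s\geq\hat S_{\hat\tz}$ ensures that the occupation interval of the state $\hat X_{\hat\tz}$ covers time $s$ exactly when $\hat S_{\hat\tz}<s$; past $\hat\tz$ the process is pinned at $\Delta$, where $\hat c=\hat g=0$. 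Organising the running-cost integral by means of (\ref{vn}), this produces
\begin{align*}
\int_0^{s}\hat c(\hat X(t),A(t))\,\d t+\hat g(\hat X(s),A(s))=\int_0^{\hat S_{\hat\tz}\wedge s}c(\hat X(t))\,\d t+\one_{\{\hat S_{\hat\tz}<s\}}\,g(\hat X_{\hat\tz}),
\end{align*}
with the convention $\hat S_\infty=\infty$; denote the right-hand side by $\Psi$, which depends only on $(\hat\tz;\hat X_0,\hat T_1,\hat X_1,\dots,\hat T_{\hat\tz},\hat X_{\hat\tz})$. On the SMP side, the definition (\ref{r-t}) of $R^s_\tz$ together with $S_\tz\neq s$ and $X(S_\tz)=X_\tz$ gives $R^s_{\tz_\pi^s}=\int_0^{S_{\tz_\pi^s}\wedge s}c(X(t))\,\d t+\one_{\{S_{\tz_\pi^s}<s\}}g(X_{\tz_\pi^s})$, i.e.\ the same function $\Psi$ applied to $(\tz_\pi^s;X_0,T_1,X_1,\dots,T_{\tz_\pi^s},X_{\tz_\pi^s})$.

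Finally I would establish the distributional identity: under $\hat\PP^\pi_{(s,x)}$ the random element $(\hat\tz;\hat X_0,\hat T_1,\dots,\hat X_{\hat\tz})$ has the same law as $(\tz_\pi^s;X_0,T_1,\dots,X_{\tz_\pi^s})$ under $\PP_x$. I would prove this by induction on $n$, comparing the two measures on events of the form $\{\hat\tz=k\}$ with $k\le n$, and $\{\hat\tz>n\}$, intersected with a cylinder in $(\hat X_0,\hat T_1,\dots,\hat X_n)$: the base case is $\hat X_0=x=X_0$, and in the inductive step, as long as no stop has occurred, $\hat Y_n$ is the $0$-augmented record $h_n^0$ of (\ref{Mn}) built from $(\hat X_0,\hat T_1,\dots,\hat X_n)$, so by (\ref{dpp-2}) the action $A_n$ equals $d_n(s-\hat S_n,\hat Y_n)$ — which is exactly the condition appearing in Definition \ref{s3-t-p} — while by (\ref{dpp-3}) with the kernel (\ref{Q}) for $a=0$ the next jump is governed by the same law $Q$ as the SMP under $\PP_x$. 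Feeding the common functional $\Psi$ through this identity of laws then yields
$$
U^\pi(s,x)=\hat\EE^\pi_{(s,x)}\big[\Psi(\hat\tz;\hat X_0,\hat T_1,\dots)\big]=\EE_x\big[\Psi(\tz_\pi^s;X_0,T_1,\dots)\big]=\EE_x\big[R^s_{\tz_\pi^s}\big]=V^{\tz_\pi^s}(s,x)
$$
for every $x\in E$ and $s\in[0,T]$, which is the assertion. The main obstacle, as indicated, is the first two steps: neutralising the $S_\tz=s$ boundary via membership in $\ppz^0_{DH}$, and exploiting the artificial ``$\geq T+1$'' sojourn after a stop so that the terminal cost $g$ is charged exactly at time $s$ while nothing spurious is added over $[0,s]$.
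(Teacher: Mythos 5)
Your argument is correct and rests on the same three pillars as the paper's proof: the partition of trajectories according to the first epoch at which the policy prescribes action $1$, the agreement of $\hat{Q}_T(\cdot,\cdot|y,0)$ with $Q(\cdot,\cdot|y)$ before that epoch together with the artificial sojourn $\geq T+1$ after it, and the use of $d_n(0,h_n^0)=0$ to rule out the boundary event $S_\tau=s$, on which (\ref{r-t}) and the SMDP cost would genuinely disagree. Where you differ is in the packaging. The paper expands $U^\pi$ via (\ref{vn}) and matches each summand against the corresponding SMP integral by writing out the Ionescu--Tulcea iterated integrals explicitly (see (\ref{pf-s3-thm1-2}) and (\ref{pf-s3-thm1-3})), and its treatment of the never-stop event $C$ requires the separate algebraic identity $\one_C=\prod_{m=0}^{k}(1-\one_{C_m})-\sum_{n=k+1}^{\infty}\one_{C_n}$; you instead reduce both cost functionals to a single functional $\Psi$ of the stopped marked point process and prove one identity of laws by induction on cylinder events. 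The two computations coincide at bottom (your induction step is exactly the paper's iterated integral), but your version concentrates the measure-theoretic work in one clean statement and lets the $\{\tau=\infty\}$ case fall out of the same limit rather than needing its own manipulation. One point to handle carefully in a full write-up: the laws of the complete records $(\hat X_0,\hat T_1,\dots,\hat X_n)$ do \emph{not} agree on $\{\hat\tau=k\}$ for $k<n$ (the SMDP is absorbed at $\Delta$ while the SMP keeps evolving), so the identity of laws must be asserted, as you do, only for the trajectory stopped at $\hat\tau$; and on $\{\hat\tau=\infty\}$ you should pass from cylinder-by-cylinder agreement to the infinite-horizon running cost via monotone convergence of $\sum_m((s-\hat S_m)^+\wedge\hat T_{m+1})\,c(\hat X_m)$.
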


\begin{proof}
For each $\hat{\oz}= (x_0, a_0, t_1,\ldots, x_n, a_n,t_{n+1} ,\ldots) \in \hat{\ooz}$, recall that $\hat{S}_n(\oz)=\sum_{k=1}^n t_k$ and
$$\hat{Y}_n (\hat{\oz})= (x_0, a_0,t_1,x_1 \ldots,a_{n-1}, t_n, x_n).$$
We define $C_n(n\geq 0)$ and $C$, the subsets of $\hat{\ooz}$, as
\begin{align*}
C_n & := \lt\{ \oz \in \hat{\ooz} : \inf \lt\{ k \in \NN: d_k (\lz-\hat{S}_{k}(\oz),\hat{Y}_k(\oz)) = 1 \rt\} =n \rt\}, \qd n \geq 0;\\
C & := \lt\{  \oz \in \hat{\ooz} : d_k (\lz-\hat{S}_{k}(\oz),\hat{Y}_k(\oz)) = 0, \forall k \geq 0\rt\}.
\end{align*}
It is easy to know that $\{C, C_n, n\geq 0 \}$ is a partition of $\hat{\ooz}$ and
$$
\one_{C_n}=\prod_{k=0}^{n-1}\one_{\{0\}}(d_k(\lz-\hat{S}_k,\hat{Y}_k)) \times \one_{\{1\}}(d_n(\lz-\hat{S}_n,\hat{Y}_n)).
$$
Hence, the monotone convergence theorem implies that
\begin{align}\lb{pf-s3-thm1-0}
U^{\pi}(s,x)=&\sum_{n=0}^\infty\hat{\EE}^\pi_{(s,x)}\lt[ \one_{C_n} \lt(\int_0^\lz \hat{c}(\hat{X}(t),A(t)) \d t+ \hat{g}(\hat{X}(s),A(s))\rt) \rt]\notag\\
&+\hat{\EE}^\pi_{(s,x)}\lt[ \one_{C} \lt(\int_0^\lz \hat{c}(\hat{X}(t),A(t))\d t+ \hat{g}(\hat{X}(s),A(s))\rt) \rt].
\end{align}
Noting that $\pz$ is a deterministic, the definition of $C_n$ and (\ref{dpp-2}) give
\be\lb{pf-s3-thm1-a}
\hat{\PP}_{(\lz,x)}^{\pz} \lt[ A_k=1|C_n \rt] = \hat{\PP}_{(\lz,x)}^{\pz} \lt[ d_k(\lz-\hat{S}_k,\hat{Y}_k)=1\big|C_n \rt]=
\left\{
\begin{array}{ll}
	0, & k < n; \\
	1, & k  = n,
\end{array}
\right.
\de
which, together with $\lim_{t\to \infty}\hat{Q}_T(t,\Delta|x,1)=1$ and $A(\Delta)=\{1\}$, implies that $\hat{\PP}_{(\lz,x)}^{\pz} (\hat{X}_m=\Delta|C_n)=1$ for all $m>n$. Thus, by Lemma \ref{s4-lem1}, (\ref{vn}), (\ref{c}) and (\ref{g}) give
\begin{align}\lb{pf-s3-thm1-1}
&\hat{\EE}^\pi_{(\lz,x)}\lt[ \one_{C_n} \lt(\int_0^\lz \hat{c}(\hat{X}(t),A(t))\d t+ \hat{g}(\hat{X}(s),A(s))\rt) \rt]\notag\\
=&\sum_{m=0}^{n-1} \hat{\EE}^\pi_{(\lz,x)} \bigg[ \hat{c}(\hat{X}_m,A_m) ((\lz -\hat{S}_m)^+\wedge \hat{T}_{m+1}) \one_{C_n} \bigg]+ \hat{\EE}^\pi_{(\lz,x)} \bigg[ \one_{C_n} \one_{[\hat{S}_n,\hat{S}_{n+1})}(\lz)\hat{g}(\hat{X}_n,1) \bigg].
\end{align}
And then, for each $m<n$, using
\be\lb{relation}
\one_{\{\tz^\lz_{\pz}=n\}}=\prod_{k=0}^{n-1} \one_{\{0\}}(d_k(\lz-S_k,Y_k^0)) \cdot \one_{\{1\}}(d_n(\lz-S_n,Y_n^0))
\de
and $\hat{Q}_T(\cdot,B|x,0)=Q(\cdot,B\setminus \{\Delta\}|x)$ for all $x\in E$ and $B\in \bq(\hat{E})$, it holds that
\begin{align}\lb{pf-s3-thm1-2}
&\hat{\EE}^\pi_{(\lz,x)} \bigg[ \hat{c}(\hat{X}_m,A_m)  ((\lz -\hat{S}_m)^+\wedge \hat{T}_{m+1}) \one_{C_n}\bigg]\notag\\
=&\int_{\hat{E}}\delta_{x}(\d x_0)\int_{\hat{E}}\int_{\RR_+}\hat{Q}_{T}(\d t_1,\d x_1|x_0,d_0(\lz,\hat{h}_0))\int_{\hat{E}}\int_{\RR_+}\hat{Q}_{T}(\d t_2,\d x_2|x_1,d_0(\lz-s_1,\hat{h}_1))\notag\\
& \cdots \int_{\hat{E}}\int_{\RR_+}\hat{Q}_{T}(\d t_n,\d x_n|x_{n-1},d_{n-1}(\lz-s_{n-1},\hat{h}_{n-1}))\hat{c}(x_m,d_m(\lz-s_m,\hat{h}_m))\notag\\
&((\lz-s_m)^+\wedge t_{m+1})\prod_{k=0}^{n-1}\one_{\{0\}}(d_k(\lz-s_k,\hat{h}_k)) \cdot \one_{\{1\}}(d_n(\lz-s_n,\hat{h}_n))\notag\\
=&\int_{E}\delta_{x}(\d x_0)\int_{E}\int_{\RR_+}Q(\d t_1,\d x_1|x_0) \cdots  \int_{E}\int_{\RR_+}Q(\d t_n,\d x_n|x_{n-1})c(x_m)\notag\\
&((\lz-s_m)^+\wedge t_{m+1})\prod_{k=0}^{n-1}\one_{\{0\}}(d_k(\lz-s_k,h_k^0)) \cdot \one_{\{1\}}(d_n(\lz-s_n,h_n^0))\notag\\
=&\EE_x\lt[((\lz-S_m)^+\wedge T_{m+1})c(X_m)\one_{\{\tz^\lz_{\pz}=n\}} \rt],
\end{align}
where $s_k=\sum_{i=1}^k t_i$, $\hat{h}_0=x_0$, $\hat{h}_{k+1}=(\hat{h}_k,d_{k}(\lz-s_k,\hat{h}_k),t_{k+1},x_{k+1})$, $h_k=(x_0,t_1,x_1,\ldots,t_{k},x_k)$, and $h_k^0$ defined in (\ref{Mn}).
For each $\hat{\oz}\in C_n$,  $\pz\in \ppz_{HD}^0$ gives $\hat{Y}_n(\hat{\oz})\in H_n^0$, and then $d_n(0,\hat{Y}_n(\hat{\oz}))=0$.
Further, by $d_n(\lz-\hat{S}_n(\hat{\oz}),\hat{Y}_n(\hat{\oz}))=1$ for all $\hat{\oz}\in C_n$,  $ C_n \cap \{\lz=\hat{S}_n\}=\emptyset$. Therefore, we have $\one_{C_n}\one_{[\hat{S}_n,\hat{S}_{n+1})}(\lz)=\one_{C_n}\one_{(\hat{S}_n,\hat{S}_{n+1})}(\lz)$. Thus, (\ref{Q})   and (\ref{g}) show
\begin{align}\lb{pf-s3-thm1-3}
&\hat{\EE}_{(\lz,x)}^\pi \bigg[ \one_{C_n} \one_{[\hat{S}_n,\hat{S}_{n+1})}(\lz)\hat{g}(\hat{X}_n,1) \bigg]\notag\\
=&\int_{\hat{E}}\delta_{x}(\d x_0)\int_{\hat{E}}\int_{\RR_+}\hat{Q}_{T}(\d t_1,\d x_1|x_0,d_0(\lz,\hat{h}_0))\int_{\hat{E}}\int_{\RR_+}\hat{Q}_{T}(\d t_2,\d x_2|x_1,d_0(\lz-\lz_1,\hat{h}_1))\notag\\
& \cdots  \int_{\hat{E}}\int_{\RR_+}\hat{Q}_{T}(\d t_{n+1},\d x_{n+1}|x_{n},d_{n}(\lz-s_{n},\hat{h}_{n})) \hat{g}(x_n,1)\one_{(s_n,s_n+t_{n+1})}(\lz)\notag\\
&\times\prod_{k=0}^{n-1}\one_{\{0\}}(d_k(\lz-s_k,\hat{h}_k)) \cdot \one_{\{1\}}(d_n(\lz-s_n,\hat{h}_n))\notag\\
=&\int_{E}\delta_{x}(\d x_0)\int_{E}\int_{\RR_+}Q(\d t_1,\d x_1|x_0) \cdots \int_{E}\int_{\RR_+}Q(\d t_{n},\d x_{n}|x_{n-1}) g(x_n)\one_{(s_n,s_n+T+1)}(\lz)\notag\\
&\times \prod_{k=0}^{n-1}\one_{\{0\}}(d_k(\lz-s_k,h_k^0)) \cdot \one_{\{1\}}(d_n(\lz-s_n,h_n^0)) \qd \text{(by $\hat{Q}_T(t,\hat{E}|x_n,1)=\one_{[T+1,\infty)}(t))$}\notag\\
=&\EE_x \lt[\one_{(S_n,+\infty)}(\lz)g(X_n)\one_{\{\tz^\lz_{\pz}=n\}} \rt].
\end{align}
Moreover, by the definition of $R^{s}_{\tau_{\pz}^s}$ given in (\ref{r-t}), (\ref{pf-s3-thm1-1}), (\ref{pf-s3-thm1-2}), and (\ref{pf-s3-thm1-3}), we have
\begin{align}\lb{pf-s3-thm1-4}
&\EE_x\lt [ \one_{\{\tz^\lz_{\pz}=n\}}R^\lz_{\tz^\lz_{\pz}}\rt] \notag\\
=&\EE_x\lt [ \one_{\{S_n<\lz\}}\one_{\{\tz^\lz_{\pz}=n\}}\lt(\int_0^{S_n}c(X(t))\d t+g(X(S_n))\rt)\rt]+\EE_x\lt [ \one_{\{S_n\geq \lz\}}\one_{\{\tz^\lz_{\pz}=n\}}\int_0^{\lz}c(X(t))\d t\rt]\notag\\
=& \EE_x\lt [\one_{\{S_n<\lz\}}\one_{\{\tz^\lz_{\pz}=n\}} g(X_n)\rt]+\EE_x\lt [\one_{\{S_n<\lz\}}\one_{\{\tz^\lz_{\pz}=n\}} \sum_{m=0}^{n-1}c(X_m)((\lz-S_m)^+\wedge T_{m+1} ) \rt]\notag\\
&+\EE_x\lt [\one_{\{S_n\geq \lz\}}\one_{\{\tz^\lz_{\pz}=n\}} \sum_{m=0}^{n-1}c(X_m) ((\lz-S_m)^+\wedge T_{m+1}) \rt]\notag\\
=&\hat{\EE}^\pi_{(\lz,x)}\lt[ \one_{C_n} \lt(\int_0^{\lz} \hat{c}(\hat{X}(t),A(t))\d t+\hat{g}(\hat{X}(\lz),A(\lz))\rt) \rt].
\end{align}
Next, we calculate the second item of (\ref{pf-s3-thm1-0}). Using that $\{C, C_n, n\geq 0 \}$ is a partition of $\hat{\ooz}$ again, for all $k\geq 0$ we obtain that
$$\one_C=\one_C \times \prod_{m=0}^{k}(1-\one_{C_m}) = (1-\sum_{n=0}^{+\infty }\one_{C_n}) \times  \prod_{m=0}^{k}(1-\one_{C_m})
=\prod_{m=0}^{k}(1-\one_{C_m})- \sum_{n=k+1}^{+\infty } \one_{C_n},$$
which, combining with (\ref{vn}), implies that
\begin{align*}
&\hat{\EE}^\pi_{(\lz,x)}\lt[ \one_{C} \lt(\int_0^{\lz} \hat{c}(\hat{X}(t),A(t))\d t+\hat{g}(\hat{X}(\lz),A(\lz))\rt) \rt]\\
=&\sum_{k=0}^\infty \bigg\{\hat{\EE}^\pi_{(\lz,x)} \bigg[ \prod_{m=0}^{k}(1-\one_{C_m}) \bigg( ((\lz -\hat{S}_k)^+\wedge \hat{T}_{k+1})\hat{c}(\hat{X}_k,A_k)+\one_{[\hat{S}_k,\hat{S}_{k+1})}(\lz)\hat{g}(\hat{X}_k,A_k)\bigg) \bigg] \\
&\qqd- \sum_{n=k+1}^{+\infty }\hat{\EE}^\pi_{(\lz,x)} \bigg[\one_{C_n} \bigg(((\lz-\hat{S}_k)^+\wedge \hat{T}_{k+1})\hat{c}(\hat{X}_k,A_k)+\one_{[\hat{S}_k,\hat{S}_{k+1})}(\lz)\hat{g}(\hat{X}_k,A_k) \bigg)\bigg] \bigg\}.
\end{align*}
Firstly, using that $\hat{g}(x,0)=0$ for all$x\in E$, (\ref{pf-s3-thm1-a}) and (\ref{pf-s3-thm1-2}), we obtain that for each $n>k$,
\begin{align*}
&\hat{\EE}^\pi_{(\lz,x)} \bigg[\one_{C_n} \bigg(((\lz-S_k)^+\wedge \hat{T}_{k+1})\hat{c}(\hat{X}_k,A_k)+\one_{[\hat{S}_k,\hat{S}_{k+1})}(\lz)\hat{g}(\hat{X}_k,A_k) \bigg)\bigg]\\
=&\EE_x\lt[((\lz-S_m)^+\wedge T_{m+1})c(X_m)\one_{\{\tz^\lz_{\pz}=n\}} \rt].
\end{align*}
And for any $k\geq 0$, by the same methods of (\ref{pf-s3-thm1-2})
\begin{align*}
&\hat{\EE}^\pi_{(\lz,x)} \bigg[ \prod_{m=0}^{k}(1-\one_{C_m}) \bigg( ((\lz -\hat{S}_k)^+\wedge \hat{T}_{k+1})\hat{c}(\hat{X}_k,A_k)+\one_{[\hat{S}_k,\hat{S}_{k+1})}(\lz)\hat{g}(\hat{X}_k,A_k)\bigg) \bigg] \\
=&\EE_x\lt[ \prod_{m=0}^k (1- \one_{\{\tz^\lz_{\pz}=m\}})((\lz-S_k)^+\wedge T_{k+1})c(X_k) \rt].
\end{align*}
Therefore, we obtain that
\begin{align*}
\EE_x\lt[ \one_{\{\tz^\lz_{\pz}=+\infty\}}R^\lz_{\tau^\lz_{\pz}}\rt]
=\hat{\EE}^\pi_{(\lz,x)}\lt[ \one_{C} \lt(\int_0^{\lz} \hat{c}(\hat{X}(t),A(t))\d t+ \hat{g}(X(\lz),A(\lz))\rt) \rt],
\end{align*}
which, together with (\ref{pf-s3-thm1-0}) and (\ref{pf-s3-thm1-4}), shows that
$$
U^{\pz}(\lz,x)=\sum_{n=0}^{\infty}\EE_x\lt[ \one_{\{\tz^\lz_{\pz}=n\}}R^\lz_{\tau^\lz_{\pz}}\rt]+\EE_x\lt[ \one_{\{\tz^\lz_{\pz}=+\infty\}}R^\lz_{\tau^\lz_{\pz}}\rt]=V^{\tz^\lz_{\pz}}(\lz,x), \forall x\in E ,\lz\in [0,T].
$$
The proof of Theorem \ref{s3-thm1} is completed.
\end{proof}

For any $\lz\in [0,T] $, the Definition \ref{s3-t-p}, Lemma \ref{s3-lem2} and Theorem \ref{s3-thm1} say that for each policy $\pi\in \Pi_{DH}^0$, we can construct a stopping time $\tau^s_\pi$ such that their $s$-horizon expected costs are equal. On the other hand, for each stopping time we also can construct a policy which satisfies this condition, see Definition \ref{s3-p-t}, Lemma \ref{s3-lem1} and Theorem \ref{tpt-prop1}.

\begin{defn}\lb{s3-p-t}
Given any stopping time $\tz \in \ggz$ and $n \geq 0$, let
$$
B_n^\tau:=\big\{Y_n(\oz):\oz=(x_0,t_1,x_1,\ldots,t_k,x_k,\ldots)\in\{\tau=n\} \big\},
$$
where $Y_n(\oz)=(x_0, t_1, x_1, \ldots, t_n, x_n)$.	For each $\hat{h}_n = (x_0, a_0, t_1, x_1,\ldots, a_{n-1},  t_n, x_n) \in \hat{{H}_n}$, $s \in \RR$,  define
$$
d_n^{\tz} (\lz,\hat{h}_n) :=
\left\{
\begin{array}{ll}
\one_{B_n^{\tz}} (x_0, t_1, x_1, \ldots, t_n, x_n)\one_{(0,\infty)}(\lz), & \hat{h}_n \in {H}_n^0; \\
1, & \hat{h}_n \in \hat{H}_n \setminus{H}_n^0,
\end{array}
\right.
$$
where ${H}_n^0=E\times (\{0\}\times \RR_+\times E)^n$. $\pz_\tz := \{ d_n^\tz, n \geq 0 \}$ is called the policy induced by $\tz$.
\end{defn}

\begin{lem}\lb{s3-lem1}
For each stopping time $\tz \in \ggz$, $\pi_\tau$ is in $\ppz^0_{DH}$ on the corresponding SMDPs.
\end{lem}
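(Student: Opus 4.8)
The plan is to verify directly that the induced policy $\pz_\tz=\{d_n^\tz,n\geq 0\}$ from Definition \ref{s3-p-t} satisfies the two defining requirements of the class $\ppz^0_{DH}$: first, that each $d_n^\tz$ is a legitimate deterministic decision rule on $\hat H_n$, i.e. measurable and taking values in $A(x_n)$; and second, the extra ``zero-at-horizon-zero'' condition $d_n^\tz(0,\hat h_n)=0$ for all $n\geq 0$ and $\hat h_n\in H_n^0$. The deterministic structure itself is immediate from the construction, since $\pz_\tz$ is given as a sequence of $\{0,1\}$-valued functions, so $\pi_n(\cdot|s,\hat h_n)$ is the Dirac measure at $d_n^\tz(s,\hat h_n)$.

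The first main step is admissibility. For $\hat h_n=(x_0,a_0,t_1,\dots,t_n,x_n)\in \hat H_n\setminus H_n^0$ we set $d_n^\tz(s,\hat h_n)=1$, which is admissible because $1\in A(x)$ for every $x\in \hat E$ (including $x=\Delta$). For $\hat h_n\in H_n^0$ the value is $\one_{B_n^\tz}(x_0,t_1,x_1,\dots,t_n,x_n)\one_{(0,\infty)}(s)\in\{0,1\}\subset A(x_n)$, using that $H_n^0\subset E\times(\{0\}\times\RR_+\times E)^n$ forces $x_n\in E$, whence $A(x_n)=\{0,1\}$. Measurability follows from \eqref{HN0}: since $\{\tz=n\}\in\fq_n=\sigma(Y_n)$, there is a Borel set $C\in\Bo(H_n)$ with $\{Y_n\in C\}=\{\tz=n\}$, and then $B_n^\tz=C$ is Borel in $H_n$, so $H_n^0$ and $C^0=\{h_n^0:h_n\in C\}$ are Borel subsets of $\hat H_n$ by \eqref{HN0}; the indicator of a Borel set times the (Borel-measurable) indicator $\one_{(0,\infty)}(s)$ is measurable on $\RR\times\hat H_n$.

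The second, and in my view the only delicate, step is the defining property of $\ppz^0_{DH}$, namely $d_n^\tz(0,\hat h_n)=0$ for every $\hat h_n\in H_n^0$. But this is built into the construction: on $H_n^0$ the rule carries the factor $\one_{(0,\infty)}(s)$, which vanishes at $s=0$, so $d_n^\tz(0,\hat h_n)=\one_{B_n^\tz}(\cdots)\cdot 0=0$ regardless of whether the history lies in $B_n^\tz$. Assembling these observations — deterministic by construction, admissible on both pieces of the partition $\hat H_n=H_n^0\cup(\hat H_n\setminus H_n^0)$, measurable via \eqref{HN0}, and zero at horizon $0$ on $H_n^0$ — gives $\pz_\tz\in\ppz^0_{DH}$, completing the proof. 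The one point to be careful about is keeping the measurable-selection bookkeeping straight, i.e. genuinely producing the Borel set $C$ representing $\{\tz=n\}$ in $H_n$ and transporting it to $\hat H_n$ through \eqref{HN0} rather than just asserting it; everything else is routine.
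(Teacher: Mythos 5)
Your proof is correct and follows essentially the same route as the paper's: check that $d_n^\tau(\lz,\hat h_n)\in A(x_n)$, establish measurability by noting $\{\tau=n\}\in\fq_n=\sigma(Y_n)$ so that $B_n^\tau$ is Borel and can be transported to $\hat H_n$ via (\ref{HN0}), and read off $d_n^\tau(0,\hat h_n)=0$ on $H_n^0$ from the factor $\one_{(0,\infty)}(\lz)$. Your treatment is, if anything, slightly more explicit than the paper's about why $B_n^\tau$ coincides with a Borel subset of $H_n$, but the substance is identical.
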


\begin{proof}
By Definition \ref{s3-p-t}, it can verify easily that $d_n^\tau (\lz,\hat{h}_n)\in A(x_n)$. Then, we just need to consider the measurability. Noting that $\{\tau=n\}\in \fq_n=\sigma (Y_n)$, we obtain $B_n^\tau \in \Bo ({H}_n)$. Thus, we have
$$
\big\{ (\lz,\hat{h}_n) \in \RR\times\hat{H}_n | d_n^\tau (\lz,\hat{h}_n) = 0 \big\} = \bigg( (0,+\infty)\times ( {H}_n \setminus B_n^\tau)^0 \bigg)\cup
\bigg( (-\infty,0] \times{H}_n^0\bigg)\in \Bo (\RR \times \hat{H}_n),
$$
where $( {H}_n \setminus B_n^\tau)^0$ is defined in (\ref{HN0}). 
Hence, $\pz_\tz := \{ d_n^\tz, n \geq 0 \}$ is a deterministic policy of the corresponding SMDPs. Furthermore, for each $\hat{h}_n\in {H}_n^0$, it holds that $f_n^\tau(0,\hat{h}_n)=0$, which implies that $\pz_\tz\in \ppz_{DH}^0$.
\end{proof}

\begin{thm}\lb{tpt-prop1}
Suppose that Assumption \ref{s2-ass1} holds. For each stopping time $\tz\in \Gamma$, let $\pi_\tau := \{ d_n^\tau, n\geq 0\}$ be the policy induced by $\tz$. Then,
\be\lb{tpt-1}
V^{\tz}(\lz,x)=U^{\pz_\tz}(\lz,x)=V^{\tz_{\pz_{\tz}}^\lz}(\lz,x) \qd \forall  (\lz,x)\in [0,T] \times E,
\de
where $\tau_{\pi_\tau}^\lz$ is the stopping time induced by $\pi_\tau$ and $\lz$.
\end{thm}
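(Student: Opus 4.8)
The plan is to deduce (\ref{tpt-1}) from Theorem \ref{s3-thm1} together with an explicit, pointwise identification of the stopping time $\tau_{\pi_\tau}^s$. The second equality in (\ref{tpt-1}) is essentially free: by Lemma \ref{s3-lem1} the induced policy $\pi_\tau=\{d_n^\tau,n\ge 0\}$ belongs to $\ppz_{DH}^0$, so Theorem \ref{s3-thm1} applies with $\pi=\pi_\tau$ and yields $U^{\pi_\tau}(s,x)=V^{\tau_{\pi_\tau}^s}(s,x)$ for all $(s,x)\in[0,T]\times E$. Hence it remains only to prove $V^\tau(s,x)=V^{\tau_{\pi_\tau}^s}(s,x)$, which together with the previous line gives $V^\tau=U^{\pi_\tau}$.

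To this end I would first pin down $\tau_{\pi_\tau}^s$. The key measurability observation is that, because $\tau$ is a stopping time, $\{\tau=n\}\in\fq_n=\sigma(Y_n)$, so $\{\tau=n\}=Y_n^{-1}(D)$ for some Borel $D$; then $B_n^\tau=Y_n(\{\tau=n\})=D\cap Y_n(\ooz)$, and therefore $\{\oz:Y_n(\oz)\in B_n^\tau\}=Y_n^{-1}(B_n^\tau)=\{\tau=n\}$ exactly. Since every SMP trajectory has $Y_n^0(\oz)\in H_n^0$, the first branch of Definition \ref{s3-p-t} applies and gives
\[
d_n^\tau\bigl(s-S_n(\oz),\,Y_n^0(\oz)\bigr)=\one_{B_n^\tau}\!\bigl(Y_n(\oz)\bigr)\,\one_{(0,\infty)}\!\bigl(s-S_n(\oz)\bigr)=\one_{\{\tau=n\}}(\oz)\,\one_{\{S_n(\oz)<s\}}.
\]
Plugging this into Definition \ref{s3-t-p}, the infimum there is over the indices $n$ with $\tau(\oz)=n$ and $S_{\tau(\oz)}(\oz)<s$, so $\tau_{\pi_\tau}^s=\tau$ on $\{S_\tau<s\}$ and $\tau_{\pi_\tau}^s=+\infty$ on $\{S_\tau\ge s\}$, $\PP_x$-a.s.; the event $\{\tau=\infty\}$ is absorbed into the second case by the regularity $\PP_x(\lim_nS_n=\infty)=1$ guaranteed by Assumption \ref{s2-ass1}.

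The rest is a short case check comparing the costs $R^s_\tau$ and $R^s_{\tau_{\pi_\tau}^s}$ from (\ref{r-t}). On $\{S_\tau<s\}$ one has $S_{\tau_{\pi_\tau}^s}=S_\tau<s$, hence $R^s_{\tau_{\pi_\tau}^s}=\int_0^{S_\tau}c(X(t))\,\d t+g(X(S_\tau))=R^s_\tau$; on $\{S_\tau\ge s\}$ one has $S_{\tau_{\pi_\tau}^s}=\lim_nS_n=\infty\ge s$, hence $R^s_{\tau_{\pi_\tau}^s}=\int_0^s c(X(t))\,\d t=R^s_\tau$. Thus $R^s_\tau=R^s_{\tau_{\pi_\tau}^s}$ $\PP_x$-a.s., and taking $\EE_x$ gives $V^\tau(s,x)=V^{\tau_{\pi_\tau}^s}(s,x)$, which together with Theorem \ref{s3-thm1} completes (\ref{tpt-1}).

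I expect the only genuine obstacle to be the identification of $\tau_{\pi_\tau}^s$ carried out in the second step; the reduction via Theorem \ref{s3-thm1} and the final cost comparison are routine. Within that step the delicate points are (i) the exact set identity $\{Y_n\in B_n^\tau\}=\{\tau=n\}$, which is precisely where the definition of a stopping time (Definition \ref{s3-defn1}) is used, and (ii) the fact that $\tau_{\pi_\tau}^s$ need \emph{not} agree with $\tau$ on $\{S_\tau\ge s\}$ — it is forced to $+\infty$ there by the factor $\one_{(0,\infty)}(s-S_n)$ — while the two costs nevertheless coincide on that event because, once the remaining horizon is exhausted, the cost in (\ref{r-t}) no longer depends on the stopping rule.
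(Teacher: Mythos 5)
Your proposal is correct and takes essentially the same route as the paper: the second equality is obtained from Lemma \ref{s3-lem1} together with Theorem \ref{s3-thm1}, and the first from the identification $\{\tau_{\pi_\tau}^{s}=n\}=\{\tau=n\}\cap\{S_n<s\}$ and $\{\tau_{\pi_\tau}^{s}=\infty\}=\{\tau=\infty\}\cup\bigcup_{k}\bigl(\{\tau=k\}\cap\{S_k\ge s\}\bigr)$, which is precisely the paper's indicator identities (\ref{pf-s3-prop1-2})--(\ref{pf-s3-prop1-3}) stated pointwise. The concluding cost comparison --- that $R^s_\tau$ no longer depends on the stopping rule once $S_\tau\ge s$ --- is the same observation the paper uses to merge the corresponding expectation terms.
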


\begin{proof}
To prove (\ref{tpt-1}), by the Theorem \ref{s3-thm1}, it suffices to show that
$
V^{\tz}(\lz,x)=V^{\tz_{\pz_{\tz}}^\lz}(\lz,x).
$
By the (\ref{relation}) and the definition of $d_n^\tz$, we have
\begin{align}\lb{pf-s3-prop1-2}
\one_{\{\tz^\lz_{\pz_\tau}=n\}}=\prod_{k=0}^{n-1} \one_{\{0\}}(d_k^\tau(\lz-S_k,Y_k^0)) \cdot \one_{\{1\}}(d_n^\tau(\lz-S_n,Y_n^0))=\one_{[0,\lz)}(S_n)\one_{\{\tz=n\}},
\end{align}
where $Y_k^0=(X_0,0,T_1,X_1,\ldots,0,T_k,X_k)$. Moreover, using the definition of $\tz^\lz_{\pz_\tau}$ again,
\begin{align}\lb{pf-s3-prop1-3}
\one_{\{\tz^\lz_{\pz_\tau}=\infty\}}
=&\prod_{n=0}^\infty \lt(1-\one_{B_n^{\tz}} (Y_n)\one_{(0,\infty)}(\lz-S_n)\rt)\notag\\
=&\one_{\{\tz=\infty\}}\prod_{n=0}^\infty \bigg(1-\one_{\{\tz=n\}}\one_{(0,\infty)}(\lz-S_n)\bigg)+\one_{\{\tz\neq\infty\}}\prod_{n=0}^\infty \bigg(1-\one_{\{\tz=n\}}\one_{(0,\infty)}(\lz-S_n)\bigg)\notag\\
=&\one_{\{\tz=\infty\}}+\sum_{k=0}^{\infty}\one_{\{\tz=k\}}\one_{[\lz,\infty)}(S_k).
\end{align}
According to the definition of $R_\tau^s$ given in (\ref{r-t}), (\ref{pf-s3-prop1-2}) and (\ref{pf-s3-prop1-3}), we have that
\begin{align*}
V^{\tz_{\pz_{\tz}}^\lz}(\lz,x)
=&\sum_{n=0}^{\infty}\EE_x\lt [\one_{\{\tz^\lz_{\pz_\tau}=n\}}R_{n}^\lz  \rt]+\EE_x\lt [\one_{\{\tz^\lz_{\pz_\tau}=\infty\}}\int_0^\lz c(X(t))\d t  \rt]\\
=&\sum_{n=0}^{\infty}\EE_x\lt [\one_{[0,\lz)}(S_n)\one_{\{\tz=n\}}R_{n}^\lz  \rt]+\EE_x\lt [\lt(\one_{\{\tz=\infty\}}+\sum_{n=0}^{\infty}\one_{\{\tz=n\}}\one_{[\lz,\infty)}(S_n )\rt)\int_0^\lz c(X(t))\d t  \rt]\\
=&\sum_{n=0}^{\infty}\lt(\EE_x\lt [\one_{[0,\lz)}(S_n)\one_{\{\tz=n\}}R_{n}^\lz  \rt]+\EE_x\lt [\one_{\{\tz=n\}}\one_{[\lz,\infty)}(S_n) R_n^\lz  \rt]\rt)+\EE_x\lt [\one_{\{\tz=\infty\}} R_\tz^\lz  \rt]\\
=&V^{\tz}(\lz,x),
\end{align*}
which is the desired result.
\end{proof}

Let $\mathcal{M}$ be the set of Borel measurable functions $u:[0,T]\times E \to [0,\infty]$. Next, we define the operator $\mathbb{G}$ from $\mathcal{M}$ into $\mathcal{M}$ as follows:
$$
\mathbb{G} u (\lz,x) = \min \bigg\{ c(x) \int_0^\lz  (1 - Q(t, E | x) )\d t + \int_{E}\int_{[0,\lz]} u (\lz-t,y) Q( \d t, \d y | x ), g(x) \bigg\},
$$
for each $u\in \mathcal{M}$ and $(\lz,x)\in [0,T]\times E$. Then, we can state our main results of optimal stopping problems, these are an algorithm for computing the value function $V^*$ and a finite optimal stopping time.

\begin{thm}\lb{ost-thm}
{\bf (Value iteration) }Suppose that Assumption \ref{s2-ass1} holds. For any $n\geq -1$, let $V_{0}^*\equiv 0$ and $V_{n+1}^*=\mathbb{G}V_{n}^*$. Then, we have $V^*=\lim_{n \to \infty}V^*_n$ and, moreover, $V^*=\mathbb{G}V^*$.	
\end{thm}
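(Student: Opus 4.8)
The plan is to deduce the statement from the value iteration for the constructed SMDP \eqref{m-smdp}, which is available by Lemma \ref{s4-lem1} and Theorem \ref{smdp-thm1}, together with the two equivalences in Theorem \ref{s3-thm1} and Theorem \ref{tpt-prop1}. Throughout, let $U^{*}$, $\{U^{*}_{n}\}$ (with $U^{*}_{0}\equiv 0$ and $U^{*}_{n+1}=\mathbb{T}U^{*}_{n}$) and $\mathbb{T}$ denote the value function, the value iterates and the dynamic programming operator of that SMDP, all defined on $[0,T]\times\hat{E}$.

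First I would identify $\mathbb{T}$ on $[0,T]\times\hat{E}$ explicitly via \eqref{Q}, \eqref{c} and \eqref{g}. For $x\in E$ and $s\in[0,T]$, since $\hat{Q}_{T}(\cdot,\cdot\,|\,x,0)=Q(\cdot,\cdot\setminus\{\Delta\}\,|\,x)$ one has
\[
\mathbb{T}^{0}v(s,x)=c(x)\int_{0}^{s}\big(1-Q(t,E|x)\big)\,\d t+\int_{E}\int_{[0,s]}v(s-t,y)\,Q(\d t,\d y|x),
\]
while, because the sojourn time under action $1$ equals $T+1>s$ deterministically (hence $\hat{Q}_{T}(s,\hat{E}|x,1)=0$ and the integral over $[0,s]$ is empty), $\mathbb{T}^{1}v(s,x)=g(x)$; consequently $\mathbb{T}v(s,x)=\mathbb{G}\big(v|_{[0,T]\times E}\big)(s,x)$ for $x\in E$, and $\mathbb{T}v(s,\Delta)=0$ because $A(\Delta)=\{1\}$ with $\hat{c}(\Delta,1)=\hat{g}(\Delta,1)=0$. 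A straightforward induction on $n$ then gives $U^{*}_{n}(\cdot,\Delta)\equiv 0$ and $U^{*}_{n}|_{[0,T]\times E}=V^{*}_{n}$ for every $n\ge 0$ (the recursion for $U^{*}_{n}$ on $E$ only ever evaluates $U^{*}_{n}$ on $[0,T]\times E$), so Theorem \ref{smdp-thm1}(a) yields $\lim_{n\to\infty}V^{*}_{n}=U^{*}|_{[0,T]\times E}\in\mathcal{M}$.

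It then remains to prove $V^{*}(s,x)=U^{*}(s,x)$ for all $(s,x)\in[0,T]\times E$. The inequality $V^{*}\ge U^{*}$ follows from Theorem \ref{tpt-prop1}: for every $\tau\in\Gamma$ one has $V^{\tau}(s,x)=U^{\pi_{\tau}}(s,x)\ge U^{*}(s,x)$, and taking the infimum over $\tau$ gives $V^{*}(s,x)\ge U^{*}(s,x)$. For $V^{*}\le U^{*}$, use Theorem \ref{smdp-thm1}(c) to pick a deterministic stationary $f^{*}$ with $\mathbb{T}^{f^{*}}U^{*}=\mathbb{T}U^{*}=U^{*}$; since $\mathbb{T}^{0}U^{*}(0,x)=0=\mathbb{T}U^{*}(0,x)$ for $x\in E$, one may redefine $f^{*}$ to be $0$ on the Borel set $\{0\}\times E$ without affecting measurability or the identity $\mathbb{T}^{f^{*}}U^{*}=U^{*}$; then $f^{*}$ is still $T$-horizon optimal (by the same induction as in the proof of Theorem \ref{smdp-thm1}(a)) and now belongs to $\Pi^{0}_{DH}$, so Theorem \ref{s3-thm1} gives $U^{*}(s,x)=U^{f^{*}}(s,x)=V^{\tau^{s}_{f^{*}}}(s,x)\ge V^{*}(s,x)$. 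Combining the two inequalities, $V^{*}=U^{*}|_{[0,T]\times E}$, hence $V^{*}=\lim_{n\to\infty}V^{*}_{n}$ by the previous paragraph, and $V^{*}=U^{*}=\mathbb{T}U^{*}|_{[0,T]\times E}=\mathbb{G}\big(U^{*}|_{[0,T]\times E}\big)=\mathbb{G}V^{*}$ by Theorem \ref{smdp-thm1}(b) and the operator identity above.

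The main obstacle is the inequality $V^{*}\le U^{*}$: an arbitrary measurable optimal selector of the SMDP need not lie in the subclass $\Pi^{0}_{DH}$ on which Theorem \ref{s3-thm1} is stated, so one must first observe that the action ``continue'' is (weakly) optimal at remaining horizon $0$ on $E$, which allows $f^{*}$ to be pushed into $\Pi^{0}_{DH}$ while remaining optimal. A secondary technical point is the evaluation of $\mathbb{T}^{1}$: here one uses that the ``stop'' kernel $\hat{Q}_{T}(\cdot\,|\,x,1)$ in \eqref{Q} jumps only at time $T+1$, beyond every admissible remaining horizon $s\le T$, so that choosing action $1$ merely freezes the state and charges the terminal cost $g(x)$.
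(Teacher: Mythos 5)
Your proposal is correct and follows essentially the same route as the paper: reduce to the constructed SMDP, obtain $V^*\ge U^*$ from Theorem \ref{tpt-prop1}, obtain $V^*\le U^*$ by exhibiting a $T$-horizon optimal policy in $\ppz_{DH}^0$ and applying Theorem \ref{s3-thm1}, and identify $\mathbb{G}$ with the restriction of $\TT$ so that $V^*_n=U^*_n$ on $[0,T]\times E$ by induction. The only (harmless) deviation is that you patch a generic optimal selector to equal $0$ on $\{0\}\times E$, whereas the paper writes down the selector explicitly in (\ref{op-p}) — a choice it then reuses to define the optimal stopping time in Theorem \ref{ost-thm1}.
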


\begin{proof}
We define the policy $f^*$ for the SMDP as in (\ref{m-smdp}) by
\be\lb{op-p}
f^*(\lz,x)=\one_{(0,\infty)\times E}(\lz,x)\one_{\{U^*(\lz,x)=\hat{g}(x,1)\}}(\lz,x)+\one_{\{\Delta\}}(x).
\de
Since $\{U^*(\lz,x)=g(x,1)\}$ is a measurable subset of $\RR \times \hat{E}$ and $f^*(\cdot,\Delta)=1$, $f^*$ is a deterministic stationary policy. Moreover, for any $x\in E$,   we have that $f^*(0,x)=0$, which implies $f^*\in \ppz_{DH}^0$. Next we will show that $f^*$ is $T$-optimal.
\begin{itemize}
\item[\rm (i)] $x=\Delta$: Noting that $A(\Delta)=\{1\}$, by $U^*_{0}\equiv0$ and $\hat{c}(\Delta,1)=\hat{g}(\Delta,1)=0$, it holds that
\begin{align*}
U^*(\lz,\Delta)=\TT U^*(\lz,\Delta)=\TT^{f^*(\lz,\Delta)} U^*(\lz,\Delta)=0,\quad \forall \lz\in [0,T].
\end{align*}
\item [\rm (ii)] $\lz= 0$ and $x\in E$: By $f^*(0,x)=0$, we have $U^*(0,x)=\min\{0,g(x)\}= \TT^{f^*(0,x)} U^*(0,x)$.
\item[\rm (iii)] $(\lz,x)\in ((0,T]\times E)$: Noting that $\TT^1 U^*(\lz,x)=\hat{g}(x,1)$, by $U^*=\TT U^*$, we have
$$
U^*(\lz,x) =
\left\{
\begin{array}{ll}
\TT^1 U^*(\lz,x)= \TT^{f^*(\lz,x)} U^*(\lz,x), & U^*(\lz,x)=\hat{g}(x,1); \\
\TT^0 U^*(\lz,x)= \TT^{f^*(\lz,x)} U^*(\lz,x), & U^*(\lz,x)\neq\hat{g}(x,1).
\end{array}
\right.
$$
\end{itemize}
Hence, $f^*$ is $T$-optimal by Theorem \ref{smdp-thm1} part (c). Therefore, Theorem \ref{s3-thm1} implies that
$$
U^*(\lz,x)= U^{f^*}(\lz,x)= V^{\tz^{\lz}_{f^*}}(\lz,x)\geq V^*(\lz,x), \quad \forall x\in E, \lz\in [0,T].
$$
where $\tz_{f^*}^s$ is the stopping time induced by $f^*$ and $s$. On the other hand, for each $\tau\in \ggz$, Theorem \ref{tpt-prop1} gives
$$
V^{\tz}(\lz,x)=U^{\pz_{\tz}}(\lz,x)\geq U^*(\lz,x)\quad \forall x\in E, \lz\in [0,T].
$$
By the arbitrariness of $\tz$, $V^*(\lz,x)\geq U^*(\lz,x)$. Hence,
\be\lb{relation2}
V^*(\lz,x)=U^*(\lz,x),\quad \forall x\in E, \lz\in [0,T].
\de
Next, we show that for all $n\geq 0$
\be\lb{relation-u-v}
U^*_n(\lz,x)=V^*_n(\lz,x),\quad \forall x\in E, \lz\in [0,T].
\de
Obviously, it holds that for $n=0$. Assume that (\ref{relation-u-v}) holds for some $n$. Then
\begin{align*}
U^*_{n+1}(\lz,x)=&\min\bigg\{ \hat{g}(x,1),\hat{c}(x,0)\int_0^\lz(1- \hat{Q}_T( t,\hat{E}|x,0))\d t\\
&\qqd+ \hat{g}(x,0)(1-\hat{Q}_T(s,\hat{E}|x,0))+\int_{[0,\lz]}\int_{\hat{E}}U^*_{n}(\lz-t,y) \hat{Q}_T(\d t,\d y|x,0)\bigg\}\\
=&\min\bigg\{ g(x),c(x)\int_0^\lz (1-Q( t,E|x))\d t+\int_{[0,\lz]} \int_EV^*_{n}(\lz-t,y) Q(\d t,\d y|x)\bigg\} \\
=&\mathbb{G}V^*_{n}(\lz,x)=V^*_{n+1}(\lz,x),
\end{align*}
where the second equality is due to inductive hypothesis and (\ref{Q})-(\ref{g}). Hence, by Theorem \ref{smdp-thm1} part (a), (\ref{relation2}) and (\ref{relation-u-v}) give that
$V^*(\lz,x)=\lim_{n \to \infty}V^*_n(\lz,x)$. Further, the monotone convergence theorem gives $V^*=\mathbb{G}V^*$.
\end{proof}

\begin{thm}\lb{ost-thm1}
	{\bf (Optimal stopping time) }Suppose that Assumption \ref{s2-ass1} holds. Define a subset of $(0,T]\times E$ by
	\be\lb{st-set}
	D^*:=\{(\lz,x)\in (0,T] \times E: V^*(\lz,x)=g(x)\},
	\de
	and for each $\oz=(x_0,t_1,x_1,\ldots,t_n,x_n,\ldots)\in \ooz$, define
	\be\lb{f-ost}
	\tz^*(\oz):=\inf\bigg\{n\big|(T-\sum_{k=1}^n t_k,x_n)\in D^* \bigg\} \wedge \inf\bigg\{n\big|T\leq\sum_{k=1}^n t_k\bigg\}
	\de
	Then, $\tz^*$ is a $T$-optimal stopping time and satisfies that $\PP_x\lt( \tz^* =\infty \rt)=0$ for all $x\in E$.
\end{thm}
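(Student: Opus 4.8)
The plan is to recognise $\tz^*$ as a cost-equivalent truncation of the stopping time induced, via Definition~\ref{s3-t-p}, by the $T$-horizon optimal deterministic stationary policy $f^*$ of the equivalent SMDP~(\ref{m-smdp}) that was exhibited in~(\ref{op-p}), and then to transfer optimality back to the SMP through Theorem~\ref{s3-thm1}; almost-sure finiteness will then come for free from the regularity $\PP_x(\lim_n S_n=\infty)=1$ that Assumption~\ref{s2-ass1} guarantees. I would first record two preliminaries. Since $V^*=\lim_n V^*_n$ is Borel by Theorem~\ref{ost-thm}, the set $D^*$ in~(\ref{st-set}) is a Borel subset of $(0,T]\times E$, so each $\{\tz^*=n\}$ lies in $\fq_n$ and $\tz^*\in\ggz$. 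Writing $\sz_T:=\inf\{n\in\NN:S_n\ge T\}$, regularity gives $\sz_T<\infty$ and $S_{\sz_T}\ge T$ $\PP_x$-a.s.\ for every $x\in E$.

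\emph{Step 1 (identifying $\tz^T_{f^*}$).} From the proof of Theorem~\ref{ost-thm} recall that $f^*\in\ppz^0_{DH}$, that $f^*$ is $T$-horizon optimal for~(\ref{m-smdp}), and that $U^*=V^*$ on $[0,T]\times E$ by~(\ref{relation2}). Because $f^*$ is deterministic stationary, its induced stopping time with planning horizon $T$ is $\tz^T_{f^*}(\oz)=\inf\{n: f^*(T-S_n(\oz),X_n(\oz))=1\}$. Along any $\oz\in\ooz$ the states $X_n$ stay in $E$, so $\one_{\{\ddz\}}(X_n)=0$ and $\hat g(X_n,1)=g(X_n)$, and one reads off from~(\ref{op-p}) that $f^*(T-S_n,X_n)=1$ iff $T-S_n>0$ and $V^*(T-S_n,X_n)=g(X_n)$, i.e.\ iff $(T-S_n,X_n)\in D^*$. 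Hence $\tz^T_{f^*}=\inf\{n:(T-S_n,X_n)\in D^*\}$, and comparison with~(\ref{f-ost}) gives the clean identity $\tz^*=\tz^T_{f^*}\wedge\sz_T$.

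\emph{Step 2 (optimality).} Next I would show $R^T_{\tz^*}=R^T_{\tz^T_{f^*}}$ $\PP_x$-a.s. On $\{\tz^T_{f^*}<\sz_T\}$ the two stopping times coincide. On $\{\tz^T_{f^*}\ge\sz_T\}$ we have $\tz^*=\sz_T$, hence $S_{\tz^*}=S_{\sz_T}\ge T$ and also $S_{\tz^T_{f^*}}\ge S_{\sz_T}\ge T$ (with $S_\infty:=\lim_nS_n=\infty$ a.s.\ handling $\tz^T_{f^*}=\infty$), so by~(\ref{r-t}) both rewards equal $\int_0^T c(X(t))\d t$. Taking $\EE_x$ and combining with Theorem~\ref{s3-thm1} (applicable since $f^*\in\ppz^0_{DH}$), the $T$-optimality of $f^*$, and~(\ref{relation2}) yields, for every $x\in E$,
$$
V^{\tz^*}(T,x)=V^{\tz^T_{f^*}}(T,x)=U^{f^*}(T,x)=U^*(T,x)=V^*(T,x),
$$
so $\tz^*$ is $T$-optimal; and $\tz^*\le\sz_T<\infty$ a.s.\ gives $\PP_x(\tz^*=\infty)=0$.

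The only step that is not mechanical is Step~2. In general $\tz^*$ is strictly smaller than $\tz^T_{f^*}$ — precisely when the horizon is exhausted before $D^*$ is entered, in particular when $D^*$ is never entered — so the optimality of $\tz^*$ cannot be read directly off Theorem~\ref{s3-thm1} applied to $f^*$. The point to be careful about is that, by the very form of~(\ref{r-t}), the payoff no longer depends on the stopping rule once $S_n\ge T$; truncating $\tz^T_{f^*}$ at $\sz_T$ therefore leaves the reward, and hence its expectation, unchanged.
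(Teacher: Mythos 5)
Your proposal is correct and follows essentially the same route as the paper: identify $\tz^*$ as $\tz^T_{f^*}\wedge\inf\{n:S_n\ge T\}$ via (\ref{op-p}) and Definition \ref{s3-t-p}, observe that truncating at the horizon-exhaustion time leaves $R^T_\cdot$ unchanged because of the form of (\ref{r-t}), and transfer optimality through Theorem \ref{s3-thm1} and (\ref{relation2}). The only cosmetic difference is that you establish $R^T_{\tz^*}=R^T_{\tz^T_{f^*}}$ pathwise, whereas the paper decomposes $\{\tz^*=n\}$ into events and matches the expectations term by term; the underlying argument is the same.
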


\begin{proof}
The definition of $f^*$ given in (\ref{op-p}) and Definition \ref{s3-t-p} show that for each $\oz\in \ooz$
\begin{align*}
\tz_{f^*}^T(\oz)=&\inf\lt\{n\in \mathbb{N}|f^*(T-S(\oz),X_n(\oz))=1\rt\}\notag\\
=&\inf \lt\{n\in \mathbb{N}\bigg|U^*(T-\sum_{k=1}^n t_k,x_n)=g(x_n), T>\sum_{k=1}^n t_k \rt\}\notag\\
=&\inf\lt \{n\in \mathbb{N}\bigg| (T-\sum_{k=1}^n t_k,x_n)\in D^* \rt\},
\end{align*}
which, implies
$\tz^*(\oz)=\tau^T_{f^*}(\oz)\wedge \inf\bigg\{n\big|T\leq\sum_{k=1}^n t_k\bigg\}.$
By $V^*(0,x)=0=V^{\tz^*}(0,x)$, we only consider the case $T>0$. First, we have
\begin{align*}
&\{\tau^*=0\}=\{\tau^T_{f^*}=0\}\cup\{\tz^T_{f^*}>0,T\leq 0\}=\{\tau^T_{f^*}=0\} ;\notag\\
&\{\tau^*=n+1\}=\{\tau^T_{f^*}=n+1\}\cup\{\tz^T_{f^*}>n+1,S_n<T\leq S_{n+1}\}, \quad \forall n\geq 0 ;\notag\\
&\{\tau^*=\infty\}=\{\tau^T_{f^*}=\infty\}\cap\{T>\lim_{n \to \infty}S_n\}.
\end{align*}
Hence, using that $f^*$ is $T$-optimal,  Theorem \ref{s3-thm1} and Theorem \ref{ost-thm}, we obtain that
\begin{align*}
V^{\tau^*}(T,x)
=&\sum_{n=1}^{\infty}\lt\{\EE_x\lt[ \one_{\{\tau^T_{f^*}=n\}} R^T_n\rt]+\EE_x\lt[ \one_{\{S_{n-1}<T\leq S_n\}}\one_{\{\tz^T_{f^*}>n\}} R^T_n\rt]\rt\}+\EE_x\lt[ \one_{\{\tau^T_{f^*}=0\}} R^T_n\rt]\\
  &+\EE_x\lt[ \one_{\{\tau^T_{f^*}=\infty\}}\one_{\{T>\lim_{n \to \infty}S_n\}} \int_0^T c(X(t))\d t\rt]\\
=&\sum_{n=0}^{\infty}\hat{\EE}_x\lt[ \one_{\{\tau^T_{f^*}=n\}} R^T_n\rt]+\sum_{n=1}^{\infty}\EE_x\lt[ \one_{\{S_{n-1}<T\leq S_n\}} \one_{\{\tz^T_{f^*}=\infty\}}R^T_{\tau^T_{f^*}}\rt]\\ 
=&V^{{\tz^T_{f^*}}}(T,x)=V^*(T,x).
\end{align*}
Therefore, $\tz^*$ is $T$-optimal stopping time. Moreover, using (\ref{f-ost}), we have
\begin{align*}
0\leq \PP_x(\tz^*=\infty) \leq \PP_x(T>\lim_{n \to \infty}S_n)=0 ,\forall x\in E,
\end{align*}
which means $\tz^*$ is a finite stopping time, whereas $\tau^T_{f^*}$ does not necessarily.
\end{proof}

The condition of Theorem \ref{ost-thm} requires the value function $V^*$, but in practical applications, $V^*$ is often unknown. Intuitively, we can replace $V^*$ by the approximation function $V^*_n$, which is obtained by the iterative algorithm given in Theorem \ref{ost-thm}. Therefore, the concept of optimal stopping time will be replaced by $\varepsilon$-$T$-optimal, that is the following definition.

\begin{defn}
Given any $\varepsilon>0$, a stopping time $\tz$ is called $\varepsilon$-$T$-optimal
if it holds that $V^{\tz}(T,x) -V^*(T,x)\leq \varepsilon$ for all $x\in E$, where $V^{\tz}(T,x)$ is $T$-horizon expected cost of the stopping time $\tau $ given in (\ref{st-c}).
\end{defn}

The following theorem shows that for any $\varepsilon>0$, we can iterate enough times and get an $\varepsilon$-$T$-optimal stopping time under some conditions. For the convenience of statement, we give two notations, i.e. $||f|| := \sup_{x\in C} |f(x)|$ for any function $f$ defined on the set $C$; $\lceil x \rceil := \min \{ n \in \mathbb{N} : n \geq x \}$ for any $x\in \RR_+$.

\begin{thm}\lb{var-thm}
Suppose that $c$ and $g$ are bounded and that the semi-Markov kernel $Q$ satisfies $\sup_{x\in E}Q(T,E|x)=:\beta<1$. For any $\varepsilon>0$, the number of iterations $N_\varepsilon$ is given by
$$
N_{\varepsilon}:=\lt\lceil\frac{\log(\varepsilon(1-\beta))-\log(M+1)}{\log\beta} \rt\rceil,
$$
where $M=T||c||+||g||$. Let $V^*_{N_\varepsilon} (\lz,x)$ be the $N_\varepsilon$-th step iterative function given in Theorem \ref{ost-thm} and $D^\varepsilon$ be the subset of $[0,T]\times E$ given by
$$
D^\varepsilon:=\lt\{(\lz,x)\in (0,T]\times E|g(x)=\mathbb{G} V^*_{N_\varepsilon} (\lz,x)\rt\}.
$$
Then, the following statements hold.
\begin{itemize}
\item[ \rm (a)] Then, define the stopping time $\tau^\varepsilon$ by
\begin{equation*}
\tau^\varepsilon(\omega)=\inf\bigg\{n\big|((T-\sum_{k=1}^n t_k,x_n)\in D^\varepsilon \bigg\}\wedge \inf\bigg\{n\big|T\leq\sum_{k=1}^n t_k\bigg\}.
\end{equation*}
Then, $\tau^\varepsilon$ is an $\varepsilon$-$T$-optimal stopping time.
\item[\rm (b)] If it holds that
\be\lb{st-op}
\inf_{(\lz,x) \in ((0,T]\times E) \setminus D^\varepsilon} \lt( g(x)-\mathbb{G}V^*_{N_\varepsilon}(\lz,x) \rt) > \varepsilon,
\de
then, $\tau^\varepsilon$ is also the $T$-optimal stopping time.
\end{itemize}
\end{thm}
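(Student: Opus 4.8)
The plan is to push the whole problem through the equivalent SMDP of Section~4 and to upgrade the value iteration of Theorem~\ref{ost-thm}/Theorem~\ref{smdp-thm1} to a quantitative geometric rate. Write $\|\cdot\|$ for the supremum norm over $[0,T]\times E$ or over $[0,T]\times\hat E$; under the boundedness hypothesis every function in sight ($V^*$, $U^*$, the iterates $V^*_n$, $U^*_n$, and the cost $U^f$ of an arbitrary policy $f$) is dominated by $M=T\|c\|+\|g\|$, since stopping at once costs $g(x)\le\|g\|$ and the running part over $[0,s]\subseteq[0,T]$ never exceeds $T\|c\|$. The first step is to note that, because $Q(s,E|x)\le Q(T,E|x)\le\beta$ for $s\le T$ and $|\min\{a,b\}-\min\{a,c\}|\le|b-c|$, the operators $\mathbb{G}$ (on $[0,T]\times E$) and $\TT^0,\TT^1,\TT$ (on $[0,T]\times\hat E$) are all $\beta$-Lipschitz in $\|\cdot\|$, hence contractions on the bounded functions since $\beta<1$. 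Iterating $V^*_{n+1}=\mathbb{G}V^*_n$ from $V^*_0\equiv0$, and recalling from Theorem~\ref{ost-thm} that the $V^*_n$ increase (since $\mathbb{G}$ is monotone) to $V^*=\mathbb{G}V^*$, gives $0\le V^*-V^*_n\le\beta^n\|V^*\|\le\beta^nM$, and likewise $\|U^*_{n+1}-U^*_n\|\le\beta^n\|U^*_1\|\le\beta^nM$. The number $N_\varepsilon$ is designed precisely so that $\beta^{N_\varepsilon}(M+1)/(1-\beta)\le\varepsilon$, whence also $\beta^{N_\varepsilon+1}M/(1-\beta)\le\varepsilon$ and $0\le V^*-V^*_{N_\varepsilon+1}\le\varepsilon$.

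Next, mimicking the optimal policy $f^*$ built in the proof of Theorem~\ref{ost-thm}, I would introduce the deterministic stationary SMDP policy
$$f^\varepsilon(s,x):=\one_{(0,\infty)\times E}(s,x)\,\one_{\{g(x)=\mathbb{G}V^*_{N_\varepsilon}(s,x)\}}(s,x)+\one_{\{\Delta\}}(x),$$
and check, exactly as for $f^*$, that $f^\varepsilon\in\ppz_{DH}^0$. Using (\ref{relation-u-v}) (so $U^*_n=V^*_n$ on $[0,T]\times E$), $U^*_n(\cdot,\Delta)\equiv0$, and the identities $\TT^1U^*_{N_\varepsilon}(s,x)=g(x)$ and $\TT^0U^*_{N_\varepsilon}(s,x)=$ the continuation branch of $\mathbb{G}V^*_{N_\varepsilon}(s,x)$ for $x\in E$, the same case analysis as in that proof (distinguishing $x=\Delta$; $s=0$ with $x\in E$; and $(s,x)\in(0,T]\times E$ according to whether $(s,x)\in D^\varepsilon$) shows that $f^\varepsilon$ is greedy for the $N_\varepsilon$-th iterate, i.e. $\TT^{f^\varepsilon}U^*_{N_\varepsilon}=\TT U^*_{N_\varepsilon}=U^*_{N_\varepsilon+1}$. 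Since $f^\varepsilon\in\FF\subset\Phi$, Lemma~\ref{smdp-lemma1}(b) identifies $U^{f^\varepsilon}$ as the (bounded) fixed point of the $\beta$-contraction $\TT^{f^\varepsilon}$, so the standard a~posteriori bound gives
$$\|U^{f^\varepsilon}-U^*_{N_\varepsilon+1}\|=\|U^{f^\varepsilon}-\TT^{f^\varepsilon}U^*_{N_\varepsilon}\|\le\frac{\beta}{1-\beta}\,\|U^*_{N_\varepsilon+1}-U^*_{N_\varepsilon}\|\le\frac{\beta^{N_\varepsilon+1}M}{1-\beta}\le\varepsilon,$$
and since $U^*_{N_\varepsilon+1}\le U^*$ this yields $U^{f^\varepsilon}\le U^*+\varepsilon$ on $[0,T]\times\hat E$.

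To finish, observe that since $f^\varepsilon$ is stationary and $X_n\in E$ along SMP trajectories, Definition~\ref{s3-t-p} reads $\tz^T_{f^\varepsilon}=\inf\{n:(T-\sum_{k=1}^{n}t_k,x_n)\in D^\varepsilon\}$, so that $\tau^\varepsilon=\tz^T_{f^\varepsilon}\wedge\inf\{n:T\le\sum_{k=1}^{n}t_k\}$; the event-by-event decomposition carried out in the proof of Theorem~\ref{ost-thm1}, which uses only this structural relation and no optimality, then gives $V^{\tau^\varepsilon}(T,x)=V^{\tz^T_{f^\varepsilon}}(T,x)$. By Theorem~\ref{s3-thm1} (applicable because $f^\varepsilon\in\ppz_{DH}^0$) and by (\ref{relation2}), $V^{\tau^\varepsilon}(T,x)=U^{f^\varepsilon}(T,x)\le U^*(T,x)+\varepsilon=V^*(T,x)+\varepsilon$, which is part~(a). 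For part~(b): one always has $D^\varepsilon\subseteq D^*$, because $(s,x)\in D^\varepsilon$ forces $g(x)=\mathbb{G}V^*_{N_\varepsilon}(s,x)=V^*_{N_\varepsilon+1}(s,x)\le V^*(s,x)\le g(x)$, hence $V^*(s,x)=g(x)$; conversely, if some $(s,x)\in D^*\setminus D^\varepsilon$, then (\ref{st-op}) gives $g(x)-\mathbb{G}V^*_{N_\varepsilon}(s,x)>\varepsilon$ while Step~1 gives $\mathbb{G}V^*_{N_\varepsilon}(s,x)=V^*_{N_\varepsilon+1}(s,x)\ge V^*(s,x)-\varepsilon=g(x)-\varepsilon$, a contradiction. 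Thus $D^\varepsilon=D^*$, so $\tau^\varepsilon$ equals the $T$-optimal stopping time $\tz^*$ of Theorem~\ref{ost-thm1}.

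The only genuinely new work beyond re-running earlier arguments is the quantitative part: establishing sup-norm contractivity of $\mathbb{G}$ and $\TT$ (this is exactly where $\sup_xQ(T,E|x)<1$ and the boundedness of $c,g$ enter, the latter to keep all norms finite so that the self-referential inequality $(1-\beta)\|U^{f^\varepsilon}-U^*_{N_\varepsilon+1}\|\le\beta\|U^*_{N_\varepsilon+1}-U^*_{N_\varepsilon}\|$ can be solved), and verifying that $f^\varepsilon$ is exactly greedy for $U^*_{N_\varepsilon}$ so that the a~posteriori bound applies with the constant matching the definition of $N_\varepsilon$.
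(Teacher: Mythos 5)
Your proof is correct and follows the same overall architecture as the paper's: bound the iterates by $M$, use $\sup_xQ(T,E|x)=\beta<1$ to get the geometric decay $\|U^*_{n+1}-U^*_n\|\le\beta^nM$, build the greedy policy $f^\varepsilon$ (your indicator $\{g(x)=\mathbb{G}V^*_{N_\varepsilon}(s,x)\}$ coincides with the paper's $\{\TT U^*_{N_\varepsilon}(s,x)=\hat g(x,1)\}$ since $\TT^1U^*_{N_\varepsilon}(s,x)=g(x)$ and $U^*_n=V^*_n$ on $E$), deduce $U^{f^\varepsilon}\le U^*+\varepsilon$, translate back through Theorems \ref{s3-thm1} and \ref{ost-thm1}, and prove $D^\varepsilon=D^*$ for part (b) exactly as the paper does. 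The one genuine difference is how you obtain the central estimate $\|U^{f^\varepsilon}-U^*_{N_\varepsilon+1}\|\le\frac{\beta}{1-\beta}\|U^*_{N_\varepsilon+1}-U^*_{N_\varepsilon}\|$: you observe that $\TT^{f^\varepsilon}$ is a $\beta$-contraction in sup norm on $[0,T]\times\hat E$, invoke Lemma \ref{smdp-lemma1}(b) to identify the bounded function $U^{f^\varepsilon}$ as its fixed point, and apply the standard a posteriori bound (solving the self-referential inequality, which is legitimate because all norms are finite under the boundedness hypothesis). The paper instead proves the inequality (\ref{var-f}) by induction on the number of decision epochs, unrolling $\TT^{f^\varepsilon}$ explicitly and passing to the limit $n\to\infty$; this is the same contraction estimate carried out by hand. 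Your route is shorter and isolates cleanly where each hypothesis enters, at the cost of leaning on Lemma \ref{smdp-lemma1}(b) and the completeness of the space of bounded measurable functions; the paper's induction is more self-contained. Both yield the same constant and hence the same $N_\varepsilon$, and your verification that $N_\varepsilon$ was chosen so that $\beta^{N_\varepsilon}(M+1)/(1-\beta)\le\varepsilon$ is accurate.
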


\begin{proof}
By (\ref{Q}), we have that
\begin{equation*}
\hat{Q}_T (T, \hat{E} | x, a) :=
\left\{
\begin{array}{ll}
	Q(T, E| x) \leq \beta, & x \in E, a = 0; \\
	\one_{ \lt[T+1, +\infty \rt) } (T)\delta_{\Delta}(\hat{E})= 0 \leq \beta, & x \in \hat{E},  a=1.
\end{array}
\right.
\end{equation*}
Then, using that $c$ and $g$ are bounded and $\mathbb{T}$ is a monotone operator, we have
$$||U_n^*||\leq ||U^*||\leq T||c|| +||g||=M,\quad \forall n\geq 0.$$
Moreover, by the definitions of $\mathbb{T}$, for each $(\lz,x)\in [0,T]\times E$, we have
\begin{align*}
 \mathbb{T} U_{n+1}^* (\lz,x)
\leq & \mathbb{T} U_{n}^* (\lz,x)+\beta||U^*_{n+1}-U^*_n||,
\end{align*}
which implies that
$
||U^*_{n+1}-U^*_{n}||= \beta ||U^*_{n}-U^*_{n-1}||\leq \beta^n ||U^*_{1}-U^*_{0}||\leq \beta^n M.
$
Then, we define a policy $f^\varepsilon$ by
$$
f^\varepsilon(\lz,x)=\one_{(0,\infty)\times E}(\lz,x)\one_{\{\TT U^*_{N_{\varepsilon}}(\lz,x)=\hat{g}(x,1)\}}(\lz,x)+\one_{\{\Delta\}}(x),
$$
and then, we have $f^\varepsilon\in \ppz_{HD}^0$. Moreover, similar to the proof of Theorem \ref{ost-thm}, we have that
\be\lb{t-f-n}
U^*_{N_{\varepsilon}+1}(\lz,x)=\TT U^*_{N_{\varepsilon}}(\lz,x)=\TT^{f^\varepsilon(\lz,x)} U^*_{N_{\varepsilon}}(\lz,x),\quad
\forall (\lz,x)\in[0,T]\times \hat{E}.
\de
Then, for each $n \geq 0$, by induction, we can show that
\be\lb{var-f}
U^*_{N_{\varepsilon}+1}(\lz,x)\geq U^{f^\varepsilon}_n(\lz,x)+\hat{\EE}_{(\lz,x)}^{f^\varepsilon}\lt[ U^*_{N_{\varepsilon}+1}(\lz-\hat{S}_{n+1},\hat{X}_{n+1})\rt]-\sum_{m=0}^n \beta^{m+1} ||U^*_{N_{\varepsilon}+1}-U^*_{N_{\varepsilon}}||.
\de
By the definition of $\TT^{f^\varepsilon(\lz,x)}$, we obtain that
\begin{align*}
U^*_{N_{\varepsilon}+1}(\lz,x)
= & U^{f^\varepsilon}_0(\lz,x)+\int_{[0,\lz]}\int_{\hat{E}} U^*_{N_{\varepsilon}}(\lz-t,y)\hat{Q}_T(\d t,\d y|x,f^\varepsilon(\lz,x))\\
\geq &  U^{f^\varepsilon}_0(\lz,x)+\hat{\EE}_{(\lz,x)}^{f^\varepsilon}\lt[ U^*_{N_{\varepsilon}+1}(\lz-\hat{S}_{1},\hat{X}_{1})\rt]-\beta||U^*_{N_{\varepsilon}+1}-U^*_{N_{\varepsilon}}||,
\end{align*}
which means that (\ref{var-f}) holds for $n=0$. On the other hand, for any $n\geq 0$
\begin{align*}
&\hat{\EE} \lt[U^*_{N_{\varepsilon}}(\lz-\hat{S}_{n+1},\hat{X}_{n+1})\rt]\\
=&\int_{\hat{E}} \delta_{x}(\d x_0)\int_{[0,s]}\int_{\hat{E}} \hat{Q}_T (\d t_1,\d x_1|x_0,f^\varepsilon(\lz,x_0))\int_{[0,\lz-s_1]}\int_{\hat{E}} \hat{Q}_T (\d t_2,\d x_2|x_1,f^\varepsilon(\lz- s_1,x_1))\\
& \cdots\int_{[0,\lz-s_n]}\int_{\hat{E}} \hat{Q}_T (\d t_{n+1},\d x_{n+1}|x_1,f^\varepsilon(\lz-s_n,x_n))U^*_{N_{\varepsilon}}(\lz-s_{n+1},x_{n+1})\\
\geq &\int_{\hat{E}} \delta_{x}(\d x_0)\int_0^\lz\int_{\hat{E}} \hat{Q}_T (\d t_1,\d x_1|x_0,f^\varepsilon(\lz,x_0))\int_{[0,\lz-s_1]}\int_{\hat{E}} \hat{Q}_T (\d t_2,\d x_2|x_1,f^\varepsilon(\lz- s_1,x_1))\\
& \cdots  \int_{[0,\lz-s_n]}\int_{\hat{E}} \hat{Q}_T (\d t_{n+1},\d x_{n+1}|x_1,f^\varepsilon(\lz-s_n,x_n))
 (U^*_{N_{\varepsilon}+1}(\lz-s_{n+1},x_{n+1})-||U^*_{N_{\varepsilon}+1}-U^*_{N_{\varepsilon}}||)\\
\geq &\hat{\EE} \lt[U^*_{N_{\varepsilon}+1}(\lz-\hat{S}_{n+1},\hat{X}_{n+1})\rt]-\beta^{n+1} ||U^*_{N_{\varepsilon}+1}-U^*_{N_{\varepsilon}}||.
\end{align*}
Thus, suppose that (\ref{var-f}) holds for some $n$, it holds that by (\ref{dpp-2}), (\ref{dpp-3}) and (\ref{t-f-n})
\begin{align*}
U^*_{N_{\varepsilon}+1}(\lz,x)\geq 
&U^{f^\varepsilon}_n(\lz,x)+\hat{\EE}_{(\lz,x)}^{f^\varepsilon}\lt[ \TT^{f^\varepsilon} U^*_{N_{\varepsilon}}(\lz-\hat{S}_{n+1},\hat{X}_{n+1})\rt]-\sum_{m=0}^n \beta^{m+1} ||U^*_{N_{\varepsilon}+1}-U^*_{N_{\varepsilon}}||\\
=&U^{f^\varepsilon}_{n+1}(\lz,x)+\hat{\EE}_{(\lz,x)}^{f^\varepsilon}\lt[  U^*_{N_{\varepsilon}}(\lz-\hat{S}_{n+2},\hat{X}_{n+2})\rt]-\sum_{m=0}^n \beta^{m+1} ||U^*_{N_{\varepsilon}+1}-U^*_{N_{\varepsilon}}||\\
\geq &U^{f^\varepsilon}_{n+1}(\lz,x)+\hat{\EE}_{(\lz,x)}^{f^\varepsilon}\lt[  U^*_{N_{\varepsilon}+1}(\lz-\hat{S}_{n+2},\hat{X}_{n+2})\rt]-\sum_{m=0}^{n+1} \beta^{m+1} ||U^*_{N_{\varepsilon}+1}-U^*_{N_{\varepsilon}}||.
\end{align*}
Hence, passing the limit $n \rightarrow \infty$ in (\ref{var-f}), it holds that for all $(\lz,x)\in [0,T]\times E$
\be\lb{s3-thm4-6}
U^*(\lz,x) \geq U^*_{N_\varepsilon+1} (\lz,x) \geq U^{f^\varepsilon}(\lz,x)- \frac{\beta}{1-\beta}||U^*_{N_\varepsilon+1}-U^*_{N_\varepsilon}||
\geq  U^{f^\varepsilon}(\lz,x)-\varepsilon.
\de
In the same method of Theorem \ref{ost-thm1}, we can verify
$\tz^\varepsilon=\tz^T_{f^\varepsilon}\wedge \inf\bigg\{n\big|T\leq S_n\bigg\}$
and
$$V^{\tz^\varepsilon}(T,x)=U^{f^{\varepsilon}}(T,x)\leq V^*(T,x)+\varepsilon,$$
i.e., $\tau^\varepsilon$ is an $\varepsilon$-$T$-optimal stopping time.

If $((0,T]\times E) \setminus D^\varepsilon= \varnothing$, then $(0,T]\times E = D^\varepsilon$ and the condition (\ref{st-op}) holds naturally. Hence, for each $(\lz,x) \in (0,T]\times E$, we have $g(x) =  V^*_{N_{\varepsilon}+1 }(\lz,x) \leq  V^* (\lz,x)$. That means $ D^* = D^\varepsilon$, where $D^*$ given in (\ref{st-set}).

Next, we consider the case $((0,T]\times E) \setminus D^\varepsilon \neq \varnothing$. Again, using the monotonicity of $\mathbb{G}$, we have $D^\vz \subset D^*$. Conversely, by the definition of $V^*_{N_\varepsilon} (\lz,x)$ and (\ref{s3-thm4-6}), it holds that
$$
\mathbb{H} V^*_{N_\varepsilon} (\lz,x) = V^*_{N_\varepsilon + 1} (\lz,x) \geq U^{f^\vz} (\lz,x) - \vz \geq U^* (\lz,x) - \vz = V^* (\lz,x) - \vz,
$$
i.e. $V^* (\lz,x) \leq \mathbb{H} V^*_{N_\varepsilon} (\lz,x) + \vz$ for each $(s,x) \in (0,T]\times E$. For each $(\lz,x) \in  ((0,T]\times E) \setminus D^\varepsilon$, the condition (\ref{st-op}) implies that
$$
g(x) -  V^*(\lz,x)  \geq g(x) - \mathbb{G} V^*_{N_\varepsilon}(\lz,x) - \varepsilon >0,
$$
which means $(\lz,x) \in ((0,T]\times E) \setminus D^*$. Hence, we have $D^* \subset D^\varepsilon$ and then $D^* = D^\varepsilon$. Finally, we have $\tau^\varepsilon = \tau^*$, which is a $T$-optimal stopping time given in Theorem \ref{ost-thm}.
\end{proof}

Hence, given any accuracy $\varepsilon>0$ and planning horizon $T>0$, we can devire an approach of computing $\varepsilon$-$T$-optimal stopping time.

{\bf An algorithm (for $\varepsilon$-$T$-optimal stopping time)}
\begin{itemize}
\item[\it Step 1](Initialization): Let $V^*_0(\lz,x)=0$ for every $(\lz,x)\in [0,T]\times E$.
\item[ \it Step 2](Iteration): Compute the function $V^*_n(\lz,x)$ for every $(\lz,x)\in [0,T]\times E$ by
$$V^*_{n+1}(\lz,x)=\min \bigg\{ c(x) \int_0^\lz  (1 - Q(t, E | x) )\d t + \int_{E}\int_{[0,\lz]}V^*_{n}(\lz-t,y) Q( \d t, \d y | x ), g(x) \bigg\}. $$
\item[ \it Step 3](Accuracy control): If $V^*_{n+1}(\lz,x)-V^*(\lz,x)\leq \varepsilon$ for every $(\lz,x)\in [0,T]\times E$, go to {\it Step 4}; otherwise, go to {\it Step 2} by replacing $n$ with $n+1$.
\item[ \it Step 4]($\varepsilon$-$T$-optimal stopping time): Compute the set 
$$D^\varepsilon=\lt\{(\lz,x)\in (0,T]\times E|g(x)=\mathbb{G} V^*_{N_\varepsilon} (\lz,x)\rt\}$$ and the $\varepsilon$-$T$-optimal stopping time
$$ \tau^\varepsilon(\omega)=\inf\{n\big|((T-\sum_{k=1}^n t_k,x_n)\in D^\varepsilon \}\wedge \inf\{n\big|T\leq\sum_{k=1}^n t_k\}, \  \forall \oz=(x_0,\ldots,x_n,t_{n+1},\ldots)\in \ooz. $$
\end{itemize}


\fund 
\noindent This work was partly supported by the National Natural Science Foundation of China (No. 11931018, 61773411, 11701588) and the Guangdong Basic and Applied Basic Research Foundation (No. 2020B1515310021).

%
%
%
%

\end{document}